\begin{document}
\title*{Wavelet sets for crystallographic groups}
\author{Kathy~D.
~Merrill}
\institute{Kathy D. Merrill \at Department of Mathematics, Colorado College, Colorado Springs, Colorado, 80903, USA,
\email{kmerrill@coloradocollege.edu}}
\maketitle
\abstract
{Single wavelet sets, and thus single wavelets, are shown to exist for the actions of all  crystallographic groups on $\mathbb R^2$ under all integer dilations.  Examples of such sets satisfying the additional requirement that they are finite unions of convex sets are presented for each of the groups under dilation by two. }   
 \keywords{Crystallographic group, Wavelet set;  MSC: 42C15; 42C40}
 %%%%%%%%%%%%%%%%%%%%%%%%%%
\section{Introduction}

Classical wavelets are finite sets of functions in $L^2(\mathbb R^n)$ whose integer translations and expansive matrix dilations give an orthonormal basis for the space.  A generalization developed in \cite{bcmo} replaces $L^2(\mathbb R^n)$ with an abstract Hilbert space, and uses a group action and powers of a compatible operator in place of the integer translations and matrix dilations.  In this paper, we consider such wavelets using the action of a 2-dimensional crystallographic group\index{crystallographic group} $\mathcal G$ on $L^2(\mathbb R^2)$ together with an integer dilation $\delta$.  Examples of crystallographic wavelets\index{crystallographic wavelet} were first given in \cite{gllww2}, \cite{gllww1}, and \cite{gllww3} under the rubric of composite dilations\index{composite dilation}.  The theory of these  wavelets was later worked out in general for all 2-dimensional crystallographic groups in \cite{mac} and \cite{mt}.  Recent related work by Barbieri, Cabrelli, Hernandez, and Molter (\cite{bchm1},\cite{bchm2}) has successfully used subspaces invariant under crystallographic groups to approximate large data sets.  
\par Many of the examples of wavelets for crystallographic groups given in the literature are Haar-type wavelets\index{Haar-type wavelet}, that is, wavelets that are normalized characteristic functions of sets.  (See e.g. \cite{mac}, \cite{mt}, \cite{gm}, \cite{bs}, \cite{bk}, \cite{krww}.) In this paper, we focus instead on wavelet set \index{wavelet set} wavelets, whose Fourier transforms are characteristic functions. Specifically, write $\mathcal F$ for the 2-dimensional Fourier transform,
\begin{equation*}
\hat f(\xi)=(\mathcal Ff)(\xi)=\int_{\mathbb R^2}f(z)e^{-2\pi i\langle z,\xi\rangle}dz,  
\end{equation*}
and $\mathcal F^{-1} f=\check{f}$ for the inverse Fourier transform. 
\begin{definition}
A finite collection of sets  $\{W_1,\cdots, W_L\}$ is called an $L$-wavelet set if $\{\mathcal F^{-1}({\bf 1}_{W_1}),\cdots,\mathcal F^{-1}({\bf 1}_{W_L})\}$ is a wavelet, so that 
$\{\delta^j(\gamma(\mathcal F^{-1}({\bf 1}_{W_l})))\}$ forms an orthonormal basis for $L^2(\mathbb R^2)$, with $-\infty<j<\infty,\;\gamma\in\mathcal G,$ and $1\leq l\leq L$.
\end{definition}
For us, $\mathcal G$ will be a two dimensional crystallographic group, and $\delta$ an integer scalar dilation.  A set $W$ is called a single wavelet set, or simply a {\it wavelet set}\index{wavelet set} if $L=1$; it is called a {\it multiwavelet set}\index{multiwavelet set} if $L>1$.  Examples of (multi)wavelet sets for some of the semidirect product\index{semidirect product} crystallographic groups were given in \cite{gllww2} and \cite{gllww1}.  An example of a multiwavelet set for a crystallographic group that is not a semidirect product appears in \cite{mer2}.  
\par All of these known examples of wavelet sets for the two dimensional crystallographic groups with integer dilations  have been multiwavelets.  In fact, all of the integer dilation crystallographic wavelets\index{crystallographic wavelet} of any type that have appeared in the literature are multiwavelets.  This is similar to the early history of classical wavelets in $L^2(\mathbb R^2)$, before Dai, Larson and Speegle \cite{dls} showed in the late 1990's that classical single wavelet sets, and thus single wavelets, exist for all dilations.  In this paper we prove an analogous result for the crystallographic groups.  That is, we show that single wavelet sets, and thus single wavelets, exist for all the two dimensional crystallographic groups for all integer dilations.  Our result does not require that the integer dilation $\delta$ be compatible\index{compatible dilation} with the crystallographic group $\mathcal G$ in the sense developed in \cite{bcmo},  \cite{gllww2}, and \cite{mt}.    For dilations that do satisfy the compatibility condition, it is shown in \cite{bmpt} that single wavelet sets can be used to decompose the wavelet representations of the crystallographic groups into direct integrals\index{direct integral} of irreducibles. (For more about the compatibility condition, see Section \ref{prelim}.) 
\par The general process for building single wavelet sets for the crystallographic groups depends on the same iterative methods used for classical single wavelet sets.  Thus, as in the classical case, the crystallographic examples built using the existence theorem have a complicated geometric structure requiring an infinite number of convex pieces.  However, just as in the classical case \cite{mer1}, simple single wavelet sets\index{simple wavelet set}, i.e. sets that consist of a finite number of convex pieces, can be found in many cases.   In this paper, we find simple single wavelet sets for all crystallographic groups under dilation by two.  Wavelets based on these sets are fairly simple to encode for image compression.  Keith Taylor \cite{kt} reports supervising an honors thesis that performed such computations by breaking these wavelet sets into triangles.  Of course the slow decay of the associated wavelets prevents rapid convergence, but such wavelets are of theoretical interest, and may also be easier to smooth because of their simple nature.  (See e.g. \cite{mer3}.)    
\par  The paper is organized as follows.  In Section 2, we review needed facts about the crystallographic groups.  Section 3 develops necessary and sufficient conditions for a set to be a crystallographic wavelet set, and contains the proof that such sets always exist for integer dilations.  Section 4 has examples of simple single wavelet sets under dilation by two for all 17 crystallographic groups.     
%%%%%%%%%%%%%%%%%%%%%%%%%%%%
\section{Preliminaries}
\label{prelim}
A  two dimensional crystallographic group\index{crystallographic group} $\mathcal G$, also called a wallpaper group\index{wallpaper group}, is a discrete subgroup of $\mathbb R^2\rtimes \mathcal O_2$, where $\mathcal O_2$ is the $2-$dimensional orthogonal group.  An element $[x,L]\in\mathcal G$, with $x\in\mathbb R^2$ and $L\in \mathcal O_2$, acts on $\mathbb R^2$ by $[x,L]\cdot z=L(x+z)$, and on $L^2(\mathbb R^2)$ by $[x,L]\cdot f(z)=f(L^{-1}z-x)$.  Multiplication in the group is given by
 $$[x,L]\cdot[y,M]=[M^{-1}x+y,LM].$$  
Such a group represents a possible symmetry pattern in the plane that is repeated in two independent directions.  Accordingly, each crystallographic group $\mathcal G$  has a normal subgroup $\mathcal N$ consisting of a lattice\index{lattice} of translations isomorphic to $\mathbb Z^2$.  The quotient $\mathcal G/{\mathcal N}$ is isomorphic to $\mathcal B=\{S\in\mathcal O_2: [x,S]\in\mathcal G \mbox{ for some } x\in\mathbb R^2\}$, which is a finite group called the point group\index{point group}.

\par There are 17 two dimensional crystallographic groups \cite{fed}.  This can be shown by first proving that the only possible elements of the point groups are rotations by $\frac{2\pi}n$, for $n=1,2,3,4,6,$ and reflections. Thirteen of the groups are semidirect products\index{semidirect product} of their point group and lattice.  Such crystallographic groups are called symmorphic\index{symmorphic}.  The four non-symmorphic groups have essential glide reflections\index{glide reflection}, which are simultaneous reflections and non-lattice translations.  These glides prevent the separation of the action of the lattice and that of the point group.  The traditional names of the wallpaper groups, which were first introduced by crystallographers, are sequences of four symbols specifying cell type, highest order rotation, and essential reflections and glides.  In this paper, we will use a common shortened form of these names. (See e.g. \cite{sch} for a description of all 17 wallpaper groups and an explanation of their names.) 

\par Table \ref{point} gives explicit descriptions of all the wallpaper groups and their point groups in terms of one possible set of generators for each.  These descriptions depend on a specific choice of lattice.  We choose $\mathbb Z^2$ whenever it is possible to do so, thus for all groups not containing rotation by 3 or 6.  In the latter case, we choose the lattice $\mathcal L$ generated by $(1,0)$ and $(\frac12,\frac{\sqrt3}2)$.  To make the lattice apparent, we include two independent translations in the list of generators in the table, even if this causes redundancy among the generators.  We write $\rho_n$ for rotation ccw by $\frac{2\pi}n$, $\tau_v$ for translation by $v$, $\sigma_v$ for reflection in the line determined by $v$, and $I$ for the $2\times 2$ identity matrix.  The
non-symmorphic groups in Table \ref{point} are pg, pmg, pgg, and p4g; these groups include glides as generators.  The information presented in the table, as well as more detail about the groups, can be found, for example, in \cite{far} or \cite{mor}.  

\begin{table}
\begin{tabular}{|c|c|c|} \hline
Name& Group Description&Point Group\index{point group}\\ \hline\hline
p1 &$\langle\tau_{(1,0)},\;\tau_{(0,1)}\rangle$& $\langle I\rangle$   \\ \hline
p2 &$\langle\tau_{(1,0)},\;\tau_{(0,1)},\;\rho_2\rangle$& $\langle\rho_2\rangle$\\ \hline
pm &$\langle\tau_{(1,0)}\;,\tau_{(0,1)},\;\sigma_{(0,1)}\rangle$& $\langle\sigma_{(0,1)}\rangle$ \\ \hline
pg &$\langle\tau_{(1,0)},\;\tau_{(0,1)},\;[(0,\frac12),\sigma_{(0,1)}]\rangle$& $\langle\sigma_{(0,1)}\rangle$ \\ \hline
pmm &$\langle\tau_{(1,0)},\;\tau_{(0,1)},\;\rho_2,\;\sigma_{(0,1)}\rangle$& $\langle\rho_2,\;\sigma_{(0,1)}\rangle$\\ \hline
pmg &$\langle\tau_{(1,0)},\;\tau_{(0,1)},\;\rho_2,\;[(0,\frac12),\sigma_{(0,1)}]\rangle$& $\langle\rho_2,\;\sigma_{(0,1)}\rangle$\\ \hline
pgg &$\langle\tau_{(1,0)},\;\tau_{(0,1)},\;\rho_2,\;[(\frac12,\frac12),\sigma_{(0,1)}]\rangle$& $\langle\rho_2,\;\sigma_{(0,1)}\rangle$\\ \hline
p4 &$\langle\tau_{(1,0)},\;\tau_{(0,1)},\;\rho_4\rangle$& $\langle\rho_4\rangle$\\  \hline
p4m &$\langle\tau_{(1,0)},\;\tau_{(0,1)},\;\rho_4,\;\sigma_{(1,1)}\rangle$& $\langle\rho_4,\;\sigma_{(1,1)}\rangle$\\ \hline
p4g &$\langle\tau_{(1,0)},\;\tau_{(0,1)},\;\rho_4,\;[(\frac12,\frac12),\sigma_{(1,1)}]\rangle$& $\langle\rho_4,\;\sigma_{(1,1)}\rangle$\\ \hline
cm &$\langle\tau_{(1,0)},\;\tau_{(0,1)},\;\sigma_{(1,1)}\rangle$& $\langle\sigma_{(1,1)}\rangle$\\ \hline
cmm &$\langle\tau_{(1,0)},\;\tau_{(0,1)},\;\rho_2,\;\sigma_{(1,1)}\rangle$& $\langle\rho_2,\;\sigma_{(1,1)}\rangle$\\ \hline
p3 &$\langle\tau_{(1,0)},\;\tau_{(\frac12,\frac{\sqrt 3}2)},\;\rho_3\rangle$& $\langle\rho_3\rangle$\\ \hline
p31m &$\langle\tau_{(1,0)},\;\tau_{(\frac12,\frac{\sqrt 3}2)},\;\rho_3,\;\sigma_{(1,0)}\rangle$& $\langle\rho_3,\;\sigma_{(1,0)}\rangle$ \\ \hline
p3m1 &$\langle\tau_{(1,0)},\;\tau_{(\frac12,\frac{\sqrt 3}2)},\;\rho_3,\;\sigma_{(0,1)}\rangle$& $\langle\rho_3,\;\sigma_{(0,1)}\rangle$ \\ \hline
p6 &$\langle\tau_{(1,0)},\;\tau_{(\frac12,\frac{\sqrt 3}2)},\;\rho_6\rangle$& $\langle\rho_6\rangle$ \\ \hline
p6m &$\langle\tau_{(1,0)},\;\tau_{(\frac12,\frac{\sqrt 3}2)},\;\rho_6,\;\sigma_{(\frac{\sqrt3}2,\frac12)}\rangle$& $\langle\rho_6,\;\sigma_{(\frac{\sqrt3}2,\frac12)}\rangle$  \\ \hline

\end{tabular}
\caption{The two dimensional crystallographic groups\index{crystallographic group}}
\label{point}
\end{table}

Since we will be discussing wavelets for the two dimensional crystallographic groups, we will be interested in the interaction between the groups and a dilation.  We will restrict our attention to integer dilations.  For $d\in\mathbb Z$, $d\geq 2$, write $\delta_d$ for the dilation operator acting on $L^2(\mathbb R^2)$ by
$$\delta_d f(z)=d f(dz).$$
Other papers in the literature about crystallographic wavelets have raised the issue of compatibility between dilations and the group action.  For $\delta_d$ to be {\it compatible}\index{compatible dilation} with a crystallographic group $\mathcal G$ requires that $\delta_d^{-1}\mathcal G \delta_d\subset \mathcal G$.  This condition ensures that the group acts in a coherent way on the layers of the associated multiresolution structure.  (See e.g. \cite{bcmo},\cite{mer2}.)  It turns out that all integer dilations are compatible with symmorphic groups, but only odd dilations are compatible with the non-symmorphic groups \cite{mac}, \cite{mt}, \cite{bmpt}.   However, as this paper is concerned only with creating wavelet sets, the issue of compatibility does not arise.  Thus, we will consider all integer dilations.   
%%%%%%%%%%%%%%%%%
\section{Crystallographic wavelet sets}
\label{cws}
A well known result for conventional wavelet sets states that $W$ is a wavelet set for dilation by $d$ if and only if $W$ tiles the plane almost everywhere under both translation by the integers and dilation by $d$.  A corresponding result for crystallographic groups is given by the following theorem.   

\begin{theorem}
\label{wscond}
Let $\mathcal G$ be a crystallographic group with lattice $\mathcal N$ and point group $\mathcal B$\index{point group}.    Then $W$ is a wavelet set for $\mathcal G$\index{crystallographic wavelet set} and dilation by $d$ if and only if all of the following conditions hold
\begin{itemize}
\item[(i)] $W$ tiles\index{tiling} $\mathbb R^2$ a.e. under translation by $\mathcal N$; that is,
$$\sum_{k\in\mathcal N}{\bf1}_{W}(\xi+k)=1\quad a.e.\; \xi\in\mathbb R^2.$$
\item[(ii)] $\;S(W)\cap S'(W)$ has measure $0$ for $S\neq S'\in \mathcal B$
\item[(iii)] $\;\;\cup_{S\in \mathcal B}S(W)$ tiles $\mathbb R^2$ a.e. under dilation by $d$; that is,
$$\sum_{j\in\mathbb Z}{\bf1}_{\cup_{S\in \mathcal B}S(W)}(d^j\xi)=1\quad a.e.\; \xi\in\mathbb R^2.$$   
\end{itemize}
\end{theorem}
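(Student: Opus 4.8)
\emph{Plan of proof.} The plan is to transfer everything to the Fourier (frequency) side, where the generating wavelet becomes $\mathbf 1_W$ and the three conditions acquire transparent geometric meaning. Since $\mathcal F$ is unitary, the system $\{\delta_d^j(\gamma\psi)\}$ with $\psi=\mathcal F^{-1}(\mathbf 1_W)$ is an orthonormal basis of $L^2(\mathbb R^2)$ if and only if $\{\mathcal F(\delta_d^j\gamma\psi)\}$ is. First I would record the elementary formulas for the two actions on the Fourier side. For $\gamma=[x,L]$, a direct change of variables (using $|\det L|=1$ and $L^{-1}=L^{T}$ for $L\in\mathcal O_2$) gives $\widehat{\gamma\psi}(\xi)=e^{-2\pi i\langle x,L^{T}\xi\rangle}\,\mathbf 1_{LW}(\xi)$, and the dilation gives $\widehat{\delta_d^{\,j}\psi}(\xi)=d^{-j}\hat\psi(\xi/d^{\,j})$. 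Thus each basis candidate is, up to a fixed unimodular factor and the normalizing constant $d^{-j}$, a modulation $e^{-2\pi i\langle k,\cdot\rangle}$ restricted to the set $d^{\,j}SW$.

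Next I would organize the index set. Writing $\mathcal B=\{S\}$ and choosing coset representatives $[x_S,S]\in\mathcal G$, every $\gamma\in\mathcal G$ is $[k+x_S,S]$ for a unique $k\in\mathcal N$ and $S\in\mathcal B$; since $S\mathcal N=\mathcal N$, the translation part contributes exactly the modulations $\{e^{-2\pi i\langle k',\cdot\rangle}:k'\in\mathcal N\}$ on the support $d^{\,j}SW$. The glide translations $x_S$ of the non-symmorphic groups enter only through the factor $e^{-2\pi i\langle x_S,\,S^{T}\xi/d^{\,j}\rangle}$, which is constant on each support and independent of $k$; multiplying a family by such a factor changes neither its orthonormality nor its completeness, so the symmorphic and non-symmorphic cases can be treated uniformly.

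With this in hand the argument decouples into two independent mechanisms. On one side, for fixed $(j,S)$ the functions are the modulations by $\mathcal N$ on $d^{\,j}SW$; after the substitution $\eta=\xi/d^{\,j}$, and using that $S$ preserves both $\mathcal N$ and the measure $|W|$, these form an orthonormal basis of $L^2(d^{\,j}SW)$ precisely when $W$ tiles $\mathbb R^2$ under translation by $\mathcal N$, which is (i). On the other side, functions with disjoint supports are automatically orthogonal, and a collection of orthonormal-on-each-piece systems assembles to an orthonormal basis of $L^2(\mathbb R^2)$ exactly when the supports $\{d^{\,j}SW\}_{j\in\mathbb Z,\,S\in\mathcal B}$ form an a.e.\ partition of the plane. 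I would show this partition property is equivalent to (ii) together with (iii): condition (ii) says the sets $\{SW\}_{S\in\mathcal B}$ are essentially disjoint, hence partition $\Omega:=\cup_{S\in\mathcal B}SW$, while (iii) says $\{d^{\,j}\Omega\}_{j\in\mathbb Z}$ tiles $\mathbb R^2$; together these give both essential disjointness of all the $d^{\,j}SW$ and the covering of the plane by their union. To assemble, I would decompose any $\hat f$ as $\sum_{j,S}\hat f\,\mathbf 1_{d^{\,j}SW}$ and expand each summand in the modulation basis of its piece (spanning), while orthonormality follows from (i) within pieces and from disjoint supports across pieces; reversing each implication yields the converse.

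The main obstacle is the \emph{only if} direction, namely extracting the clean geometric conditions (i)--(iii) from the abstract orthonormal-basis hypothesis. Here I would have to rule out positive-measure overlaps among the supports $d^{\,j}SW$ (which would destroy orthogonality between fibers, or force a single fiber's modulations to fail orthonormality) and rule out gaps in their union (which would leave $L^2$ of the missed region orthogonal to the entire system, contradicting completeness); making precise that overlap forces non-orthogonality and that a gap forces incompleteness is the delicate part, and is where the measure-theoretic bookkeeping lives, as opposed to the formal Fourier computation. The structural fact $S\mathcal N=\mathcal N$, together with the measure preservation of the point group, is what permits condition (i) to be stated for $W$ alone rather than separately for each $SW$.
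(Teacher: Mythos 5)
Your proposal is correct and follows essentially the same route as the paper's own proof: pass to the Fourier side, decompose $\mathcal G$ into lattice cosets $[\ell+c_S,S]$ so that the glide translations appear only as a common unimodular factor that affects neither orthonormality nor completeness, identify condition (i) with the lattice modulations forming an orthonormal basis of $L^2(S(W))$, and assemble via the a.e.\ partition of the plane by the supports $d^{\,j}S(W)$, with the converse read off from orthonormality (giving (ii)) and completeness (giving (iii) and (i)). One minor slip: the glide factor $e^{-2\pi i\langle x_S,\,S^{T}\xi/d^{\,j}\rangle}$ is a unimodular function of $\xi$ rather than a constant on each support, but your stated justification --- that a fixed unimodular factor independent of $k$ preserves both orthonormality and completeness --- is exactly the right one and matches the paper's treatment of the factor $e^{-2\pi i\langle S(c_S),\xi\rangle}$.
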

\begin{proof}
\par Writing $\widehat\psi={\bf1}_W$, we will show that $\{\widehat\delta^j\widehat\gamma\widehat\psi:j\in\mathbb Z, \gamma\in\mathcal G\}$ is an orthonormal basis for $L^2(\mathbb R^2)$ if and only if the three conditions hold.   
A straightforward calculation yields
\begin{equation}
\label{ftxS}
([x,S]^{\wedge}f)(\xi)=e^{-2\pi i\langle x,S^*(\xi)\rangle}f(S^*(\xi)).
\end{equation}   
Thus we see that $\{[\ell,I]^{\wedge}\widehat\psi:\ell\in\mathcal N\}=\{e^{2\pi i \langle \ell,\cdot\rangle}{\bf 1}_W:\ell\in\mathcal N\}$ gives an orthonormal basis for $L^2(W)$ if and only if (i) holds.  

We write $\mathcal G=\{[\ell+c_S,S]:\ell\in\mathcal N,S\in \mathcal B\}$, where $c_S\in\mathbb R^2\setminus\mathcal N$ is a constant for each $S$, and $c_S=0$ if $S=I$.  Applying Equation (1) for a fixed $S\in\mathcal B$, we have
 \begin{equation*}
 [\ell+c_S,S]^{\wedge}\widehat\psi(\xi)=e^{-2\pi i\langle S(c_S),\xi\rangle}e^{-2\pi i\langle \ell,S^*(\xi)\rangle}{\bf 1}_{W}(S^*(\xi)).
 \end{equation*}
This shows that condition (i) is equivalent to  $\{[\ell+c_S,S]^{\wedge}\widehat\psi:\ell\in\mathcal N\}$ forming an orthonormal basis for $L^2(S(W))$.
 
\par Using this, we see that conditions (i) and (ii) together imply that $\{\widehat\gamma\widehat\psi:\gamma\in\mathcal G\}$ forms an orthonormal basis for $L^2(\cup_{S\in \mathcal B}S(W))$.  Since
\begin{equation}
\label{ftdelta}
(\widehat\delta f)(\xi)=\delta^{-1}f(\xi).
\end{equation}   
we see that including condition (iii) implies 
that $\{\widehat\delta^j\widehat\gamma\widehat\psi:j\in\mathbb Z, \gamma\in\mathcal G\}$ is an orthonormal basis for $L^2(\mathbb R^2)$.

Conversely, if $W$ is a wavelet set for $\mathcal G$, the orthonormality of $\{\widehat\gamma\widehat\psi:\gamma\in\mathcal G\}$ implies condition (ii), and the orthonormality and completeness of  $\{\widehat\delta^j\widehat\gamma\widehat\psi:j\in\mathbb Z, \gamma\in\mathcal G\}$ implies both (iii) and the fact that $\{[\ell+c_S,S]^{\wedge}\widehat\psi:\ell\in\mathcal N\}$ forms an orthonormal basis for $L^2(S(W))$.   We have shown the latter to be equivalent to condition (i).
\end{proof}
\begin{remark} A similar proof shows that ${\bf1}_{W_1}, {\bf 1}_{W_2},\dots{\bf1}_{W_L}$ are Fourier transforms of an $L-$wavelet\index{multiwavelet set} for $\mathcal G$ and dilation by $d$ if and only if all of the following hold:
\begin{itemize}
\item[(i)] Each $W_l$ tiles $\mathbb R^2$ a.e. under translation by $\mathcal N$
\item[(ii)] $\;S(W_l)\cap S'(W_{l'})$ has measure 0 unless $S=S'$ and $l=l'$
\item[(iii)] $\;\;\cup_{l=1}^L\cup_{S\in \mathcal B}S(W)$ tiles $\mathbb R^2$ a.e. under dilation by $d$.  
\end{itemize}
\end{remark}
\begin{remark}
In the case of symmorphic\index{symmorphic} (semi-direct product) crystallographic groups, the theory of composite dilations applies.  For this case, Gu, Labate, Lim, Weiss, and Wilson \cite{gllww3} give necessary and sufficient conditions for Parseval frame\index{Parseval wavelet} multiwavelet sets\index{multiwavelet set} that are similar to conditions (i) and (iii) of Theorem \ref{wscond}.  Condition (ii) is not needed because their theorem describes Parseval wavelets rather than orthonormal wavelets.  The authors also give a strategy for building orthonormal multiwavelet sets.  They show this strategy cannot create single wavelet sets for reasons similar to those that prevent classical single MRA wavelets in $L^2(\mathbb R^2).$ 
\end{remark}

\begin{corollary}
\label{glide}
If two crystallographic groups $\mathcal G$ and $\mathcal G'$ differ in replacing reflections by glide reflections\index{glide reflection} that mirror over the same line, then $\mathcal G$ and $\mathcal G'$ have the same wavelet sets.  
\end{corollary}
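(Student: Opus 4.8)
The plan is to deduce the corollary directly from Theorem \ref{wscond}, by observing that the three conditions characterizing a wavelet set refer only to the lattice $\mathcal N$ and the point group $\mathcal B$, and not to the finer data recording how each $S\in\mathcal B$ is lifted into $\mathcal G$. Once this is established, it remains only to check that passing from reflections to glide reflections over the same line leaves both $\mathcal N$ and $\mathcal B$ unchanged, whence $\mathcal G$ and $\mathcal G'$ satisfy literally the same conditions (i)--(iii) and therefore have the same wavelet sets.

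First I would isolate the relevant feature of Theorem \ref{wscond}. Condition (i) involves only translation by $\mathcal N$, while conditions (ii) and (iii) involve only the action of the orthogonal transformations $S\in\mathcal B$ on the candidate set $W$. In particular, writing $\mathcal G=\{[\ell+c_S,S]:\ell\in\mathcal N,\,S\in\mathcal B\}$ as in the proof of Theorem \ref{wscond}, the translation constants $c_S$ never enter any of the three conditions. Thus the collection of wavelet sets for $\mathcal G$ and dilation by $d$ is determined by the pair $(\mathcal N,\mathcal B)$ alone.

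It then remains to verify that the replacement described in the hypothesis does not disturb this pair, and this is the step I expect to require the most care, elementary though it is. Replacing a reflection $\sigma_v$ by a glide reflection $[t,\sigma_v]$ over the same mirror line does not change the orthogonal part $\sigma_v$, so the images of the generators under projection to $\mathcal O_2$ are unchanged and hence $\mathcal B$ is the same for $\mathcal G$ and $\mathcal G'$. Likewise the translation lattice is unchanged: the glide $[t,\sigma_v]$ is itself not a pure translation, while its square $[\,t+\sigma_v(t),\,I\,]$ already lies in $\mathcal N$; hence the glide introduces no new pure translations, and the subgroup of pure translations remains $\mathcal N$ in both groups. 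One checks this concretely against Table \ref{point} for the pairs to which the corollary applies, for instance pm and pg, or p4m and p4g. With $\mathcal N$ and $\mathcal B$ identical for $\mathcal G$ and $\mathcal G'$, Theorem \ref{wscond} yields identical conditions, and the corollary follows.
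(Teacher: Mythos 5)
Your proposal is correct and matches the paper's (implicit) argument: the corollary is stated without proof precisely because conditions (i)--(iii) of Theorem \ref{wscond} depend only on the pair $(\mathcal N,\mathcal B)$ and not on the translation constants $c_S$, which is exactly the observation you make. Your added verification that the glide leaves $\mathcal N$ and $\mathcal B$ unchanged (via the square $[t+\sigma_v(t),I]\in\mathcal N$) is a sound elaboration of a step the paper leaves to the reader.
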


Using Theorem \ref{wscond}, we can build single wavelet sets for crystallographic groups from conventional subspace wavelet sets\index{subspace wavelet set}.  Let $M\subset\mathbb R^2$ be a finite union of polar sectors of the form $\{r(\cos\phi, \sin\phi):\theta_1\leq\phi\leq\theta_2, r\geq0\}$, and let $\mathcal H$ be the Hilbert space defined by $\mathcal F(\mathcal H)=L^2(\mathbb R^2){\bf1}_{M}$.  A set $W\subset M$ is called a {\it subspace wavelet set} for $\mathcal H$ if ${\bf 1}_W$ is the Fourier transform of a wavelet for $\mathcal H$.   A more general form of such sets appears throughout the literature, and their properties are thoroughly  described in \cite{ddgh}.  For example, a subspace wavelet set $W$ of the form above has the property that its integer translates tile\index{tiling} $\mathbb R^2$, while its dilates by $d$ tile $M$.  When combined with information about the point group given in Table \ref{point}, such a set is perfectly situated to satisfy Theorem \ref{wscond}.  Thus, the result from \cite{ddgh} that subspace wavelet sets always exist can be used to prove the following theorem.  

\newsavebox{\rhomat}
\savebox{\rhomat}{ $\left(\begin{smallmatrix}
1&\frac12\\
0&\frac{\sqrt3}2
\end{smallmatrix}\right)$}
\newsavebox{\rhoamat}
\savebox{\rhoamat}{ $\left(\begin{smallmatrix}
1&-\frac12\\
0&\frac{\sqrt3}2
\end{smallmatrix}\right)$}    

\begin{theorem}
\label{existence}
Single wavelet sets exist for each of the 2-dimensional crystallographic groups\index{crystallographic wavelet set} for any integer dilation $d$.
\end{theorem}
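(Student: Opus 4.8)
The plan is to realize the three conditions of Theorem~\ref{wscond} by pairing a fundamental domain for the point group with a subspace wavelet set supported on it. For each group $\mathcal{G}$ with point group $\mathcal{B}$ of order $m$, I would first choose $M\subset\mathbb{R}^2$ to be a polar sector of angular width $2\pi/m$ serving as a fundamental domain for the action of $\mathcal{B}$ on $\mathbb{R}^2$. Reading the point groups off Table~\ref{point}, every $\mathcal{B}$ is either a cyclic rotation group $\langle\rho_n\rangle$ or a dihedral group of order $2n$ generated by $\rho_n$ together with a reflection $\sigma$; in each case such a sector exists, namely a wedge of angle $2\pi/n$ in the cyclic case and the sector of angle $\pi/n$ bounded by two adjacent mirror lines in the dihedral case. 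By construction $\{S(M):S\in\mathcal{B}\}$ tiles $\mathbb{R}^2$ with pairwise disjoint interiors, so that condition (ii) of Theorem~\ref{wscond} holds automatically for \emph{any} $W\subset M$.

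Next I would invoke the existence of subspace wavelet sets from \cite{ddgh} for the sector $M$, producing a set $W\subset M$ whose translates by the lattice $\mathcal{N}$ tile $\mathbb{R}^2$ and whose dilates by $d$ tile $M$. The first property is exactly condition (i). For condition (iii), I would use that the scalar dilation $\delta_d$ commutes with every orthogonal map $S\in\mathcal{B}$: since $\{d^jW:j\in\mathbb{Z}\}$ tiles $M$, applying $S$ shows that $\{d^jS(W):j\in\mathbb{Z}\}$ tiles $S(M)$, and the union over $S\in\mathcal{B}$ tiles $\cup_{S\in\mathcal{B}}S(M)=\mathbb{R}^2$. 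This is precisely condition (iii), so that $W$ is a wavelet set for $\mathcal{G}$ and dilation by $d$.

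The step requiring the most care is obtaining the subspace wavelet set with translation tiling by the \emph{correct} lattice. For the thirteen groups whose lattice is $\mathbb{Z}^2$ this is the setting treated directly in \cite{ddgh}. For the four groups containing $\rho_3$ or $\rho_6$, whose lattice is the hexagonal lattice $\mathcal{L}$, I would reduce to the $\mathbb{Z}^2$ case by the linear change of coordinates $A$ with $A\mathbb{Z}^2=\mathcal{L}$: construct a subspace wavelet set $\widetilde W$ in the cone $A^{-1}(M)$ that tiles under $\mathbb{Z}^2$ and dilates to tile $A^{-1}(M)$, and set $W=A(\widetilde W)$. Because a linear map carries cones to cones (so $A^{-1}(M)$ is again a sector of the required form) and because $\delta_d$ is scalar and hence commutes with $A$, the set $W$ tiles under $A\mathbb{Z}^2=\mathcal{N}$ and its dilates tile $A(A^{-1}(M))=M$, exactly as needed. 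I expect this lattice normalization, together with verifying that the chosen sector is genuinely a fundamental domain for each dihedral point group, to be the only nonroutine points; the remaining verifications follow mechanically from Theorem~\ref{wscond} and the tiling properties of subspace wavelet sets.
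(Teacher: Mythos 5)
Your proposal is correct and follows essentially the same route as the paper: verify the three conditions of Theorem~\ref{wscond} by choosing a polar-sector fundamental domain for the point group (so that condition (ii) is automatic and condition (iii) follows because scalar dilation commutes with the orthogonal point-group elements), invoke the existence of subspace wavelet sets from \cite{ddgh} for condition (i) and the dilation tiling of the sector, and handle the hexagonal-lattice groups by a linear change of lattice. The only difference is minor: by placing the dihedral sector between adjacent mirror lines and pulling back through a general lattice-normalizing map $A$, you treat all five hexagonal groups uniformly and thereby avoid the paper's special handling of p3m1, for which it uses a subspace wavelet set for the non-convex union of the first and fifth octants before applying the specific matrix $L$.
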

\begin{proof} By the general theorem in \cite{dls}, as well as numerous constructions in, e.g. \cite{dls},\cite{bmm},\cite{bl}, we know that conventional single wavelet sets exist for all dilations.  These sets in turn are crystallographic wavelet sets for the group p1.  For the other crystallographic groups, we will use conventional subspace wavelet sets, which are shown to exist for all dilations and dilation invariant subspaces in \cite{ddgh}.
\par First, let $\mathcal G$ be a 2-dimensional crystallographic group with lattice $\mathbb Z^2$ and point group $\mathcal B$.  If $M$ is a polar sector that tiles the plane under the action of $\mathcal B$, and if $W$ is a subspace wavelet set for $M$, then by Theorem \ref{wscond}, $W$ is a crystallographic wavelet set for $\mathcal G$.  Specifically, for the groups p2, pm and pg, we can take $M$ to be the right half plane; for the group cm, we take $M$ to be one of the half planes determined by the line at angle $\frac{\pi}4$ above the horizontal axis; for pmm, pmg, pgg, and p4, $M$ is the first quadrant; for cmm, the quadrant between $\theta_1=-\frac{\pi}4$ and $\theta_2=\frac{\pi}4$; and for p4m and p4g, the first octant.  

The last 5 groups in Table \ref{point}, which involve 3-fold or 6-fold rotation, contain translations by the hexagonal lattice $\mathcal L$ determined by integer combinations of $(1,0)$ and $(\frac12,\frac{\sqrt 3}2)$, rather than by $\mathbb Z^2$.  Define $L=\usebox{\rhomat}$ and $L'=\usebox{\rhoamat}$.  Then if the set $W$ tiles under translation by the integers, both $LW$ and $L'W$ tile under translation by the hexagonal lattice\index{lattice}.  If $W$ tiles the polar sector $\theta\in[0,\theta_2]$ under dilation, $LW$ tiles $\theta\in[0,\frac{2\theta_2}3]$, and $L'W$ tiles $\theta\in[0,\frac{4\theta_2}3]$.  Thus, if $W$ is a subspace wavelet set for the first quadrant, $LW$ and $L'W$ are crystallographic wavelet sets for p6 and p3 respectively.  If $W$ is a subspace wavelet set for the first octant, $LW$ and $L'W$ are crystallographic wavelet sets for p6m and p31m respectively.  Neither $LW$ nor $L'W$ gives a wavelet set for p3m1 since in the first case, the union of the point group images do not tile $\mathbb R^2$ by dilation, and in the second, the point group images are not disjoint.  However, if we instead let $W$ be a subspace wavelet for the union of the first and fifth octants, then $LW$ does satisfy all three conditions of Theorem \ref{wscond} for p3m1.     
\end{proof}

\begin{examp}
We use the iterative method from \cite{bmm}, Theorem 3, to create a subspace wavelet set\index{subspace wavelet set} for the first quadrant under dilation by 3.  As in \cite{bmm}, we first build a scaling set $E$ such that $W=3E\setminus E$ is a subspace wavelet set for the first quadrant.  We start the iterative algorithm with $E_0=[0,\frac13)^2$.  Each successive step scales the previous piece by $\frac13$, and then translates it by $(\frac13,\frac13)$.  The resulting scaling set is $E=\cup_{n=1} ^{\infty}\left([0,\frac1{3^{n}})^2+(\sum_{j=1}^{n-1}\frac1{3^j},\sum_{j=1}^{n-1}\frac1{3^j})\right)$.  The conventional subspace wavelet set, $W=3E\setminus E$, is shown on the left in Figure \ref{quad1}. As mentioned in the proof of Theorem \ref{existence}, this is a crystallographic wavelet set\index{crystallographic wavelet set}  for pmm, pmg, pgg and p4.  We create wavelet sets for p3 and p6, by multiplying $W$ by the lattice matrices $L'=\usebox{\rhoamat}$ and $L=\usebox{\rhomat}$ respectively.  The results are shown in the center and  right of  Figure \ref{quad1}.        

\begin{figure}[h]
\centering $\begin{array}{ccc}
 \setlength{\unitlength}{80bp}
\begin{picture}(1,1)(.3,0)
\put(0,0){\includegraphics[width=\unitlength]{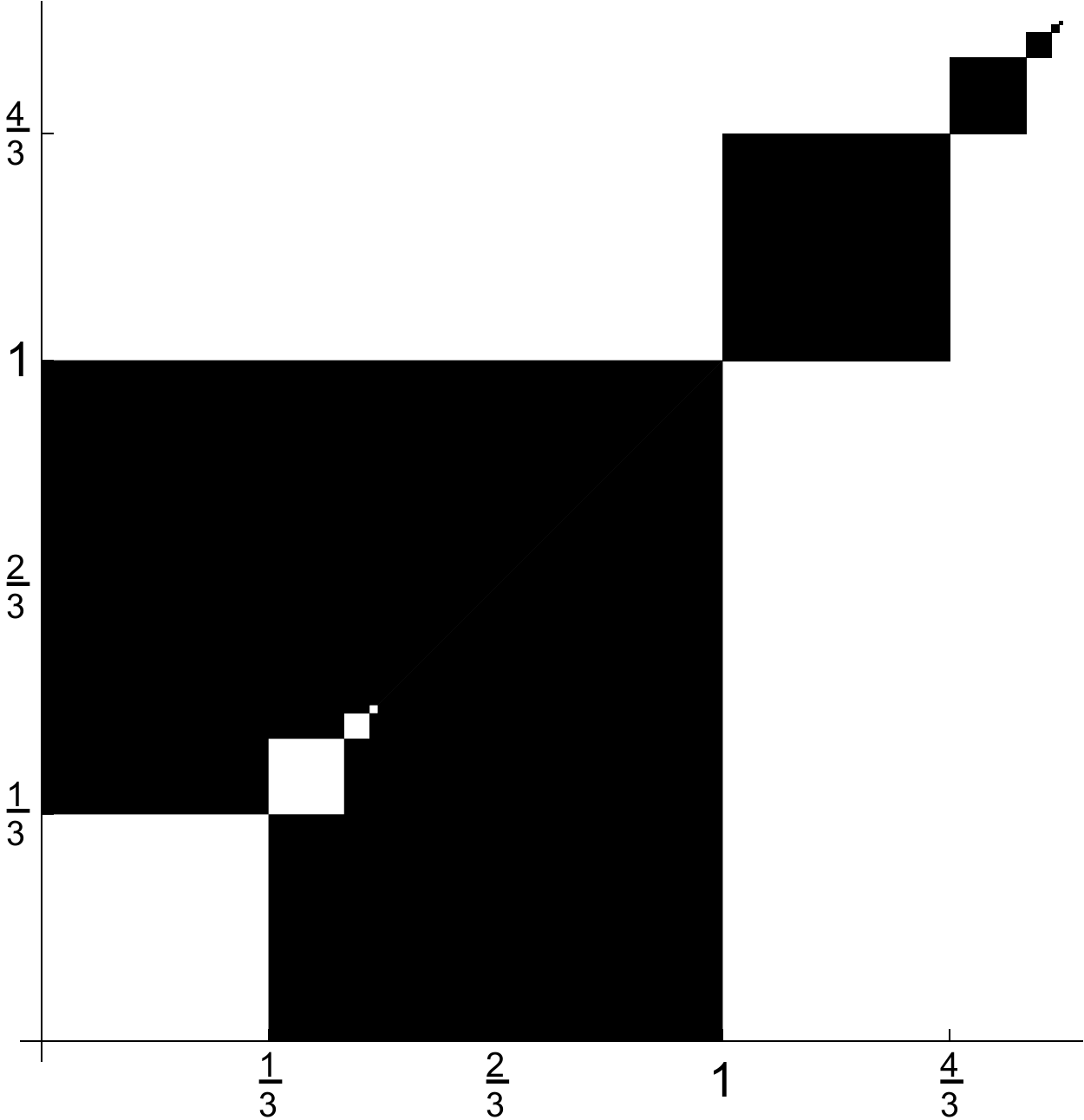}}
\end{picture}&
\setlength{\unitlength}{80bp}
\begin{picture}(1,1)(.1,0)
\put(0,0){\includegraphics[width=\unitlength]{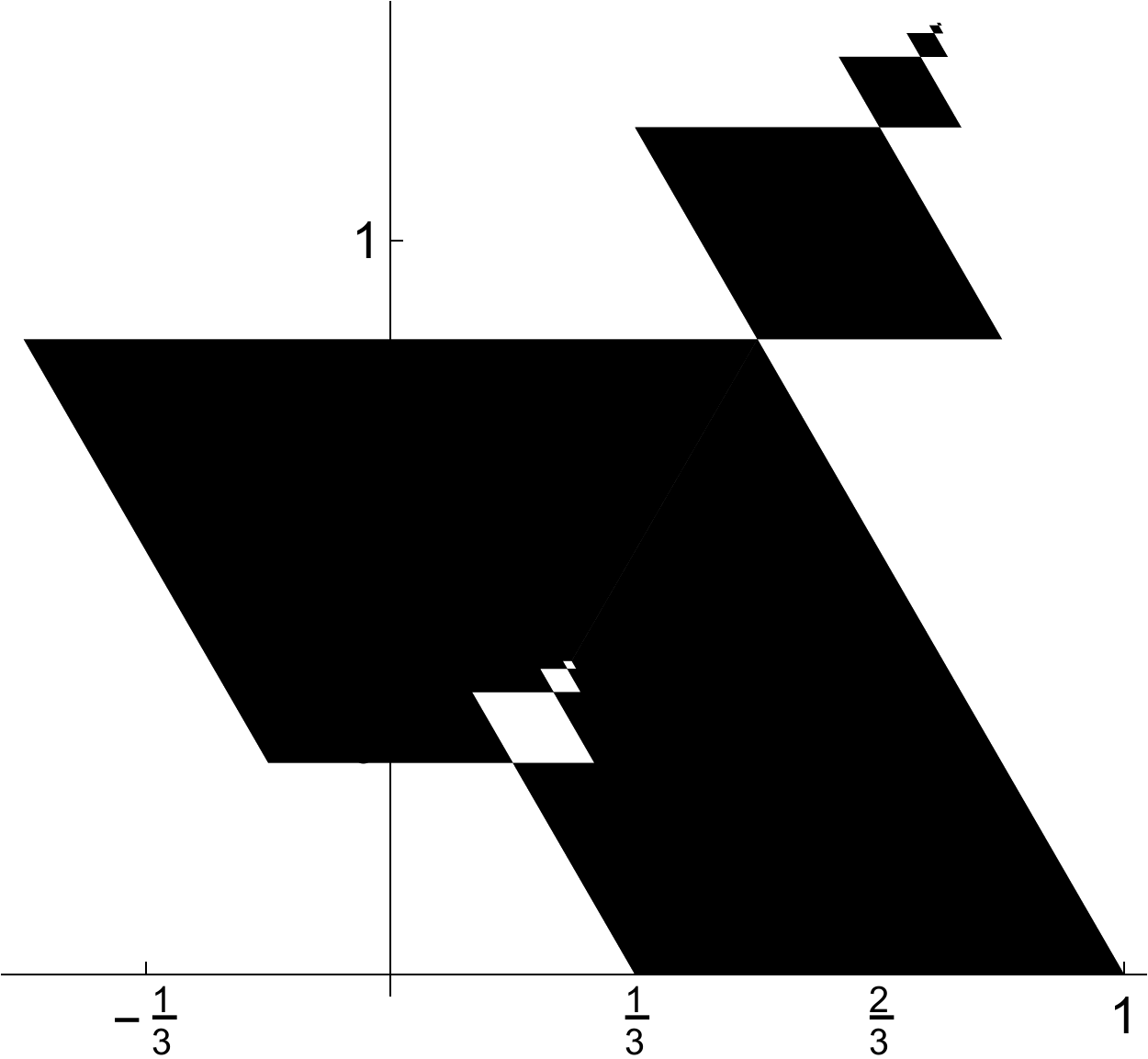}}
\end{picture}&
\setlength{\unitlength}{120bp}
\begin{picture}(1,1)(-.1,0)
\put(0,0){\includegraphics[width=\unitlength]{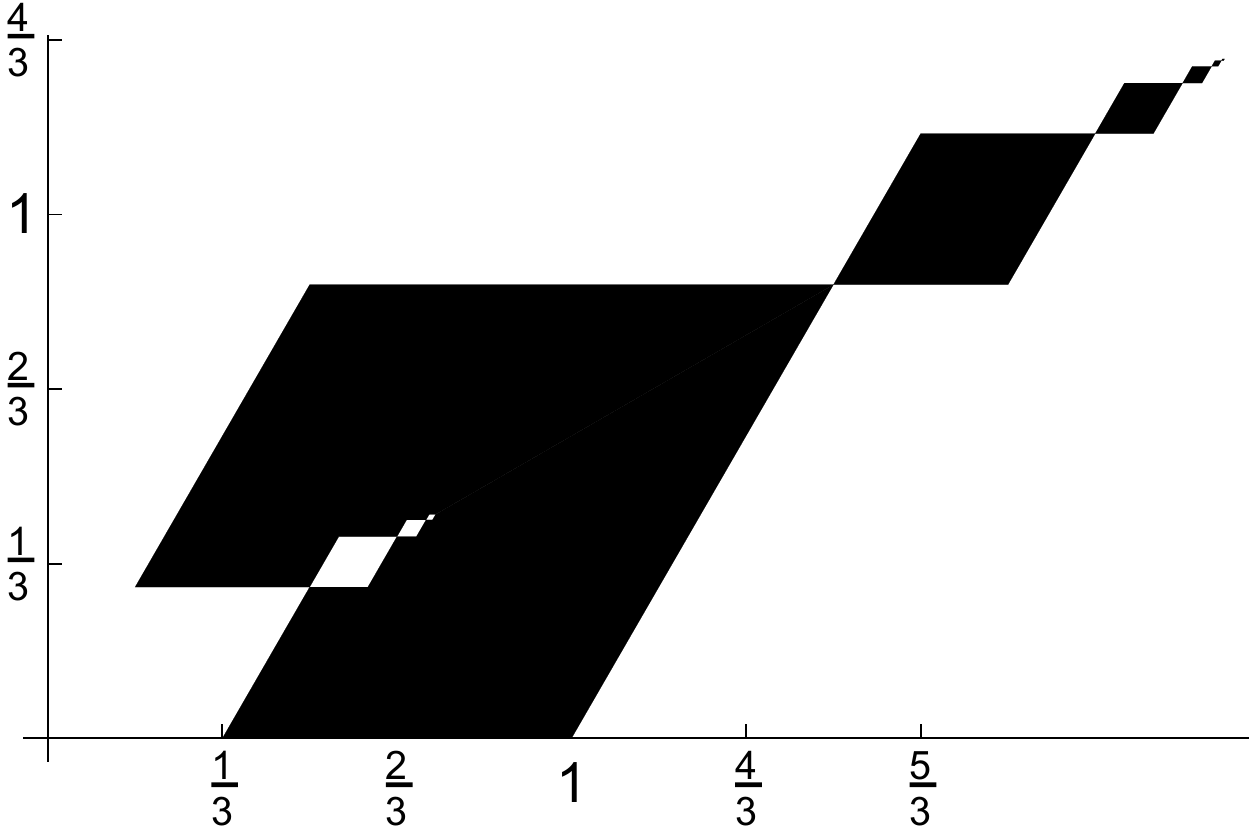}}
\end{picture}
\end{array}$
\caption{Dilation by 3 wavelet set for p4, pmm, pmg and pgg on the left,  for p3 in the center, and for p6 on the right.}
\label{quad1}
\end{figure}
\end{examp}

In the next section, we will find simple wavelet sets, that is, wavelet sets that are a finite union of convex sets, for all 17 crystallographic groups under dilation by 2.  Many of the sets are easily modified to other dilations.  In particular, the constructions below can be modified to give simple wavelet sets for dilation by 3, the smallest dilation that is compatible\index{compatible dilation} with all 17 crystallographic groups, for all but the groups p3 and p3m1.    
%%%%%%%%%%%%%%%%%%%%%%%%%%%%%%%%%%%%%%%
\section{Simple crystallographic wavelet sets}
\label{simple}
We first remark that crystallographic wavelet sets for the group p1 are the same as conventional wavelet sets.  Simple conventional wavelet sets\index{simple wavelet set}, and thus simple crystallographic wavelet sets for p1, are given in \cite{mer1} for any integer dilation $d$.  We will now use the ideas from the previous section to find simple wavelet sets under dilation by 2 for the other 16 groups.  Simple conventional subspace wavelet sets are sometimes difficult to find for the polar regions described in Theorem \ref{existence}.  Thus, in some cases, we will instead need to break up conventional simple multiwavelet sets\index{multiwavelet set}. 

\par For example, for crystallographic groups whose point groups have order 2, we find simple single wavelet sets by partitioning conventional 2-wavelet sets.  Such sets tile under dilation and give 2-fold tilings under translation\index{tiling}.  The sets in Figure \ref{2wave} were shown to be 2-wavelet sets for translation by $\mathbb Z^2$ and dilation by 2 in \cite{mer2}. 

\begin{figure}[h]
\centering $\begin{array}{cc}
 \setlength{\unitlength}{170bp}
\begin{picture}(1,1)(.1,-.2)
\put(0,0){\includegraphics[width=\unitlength]{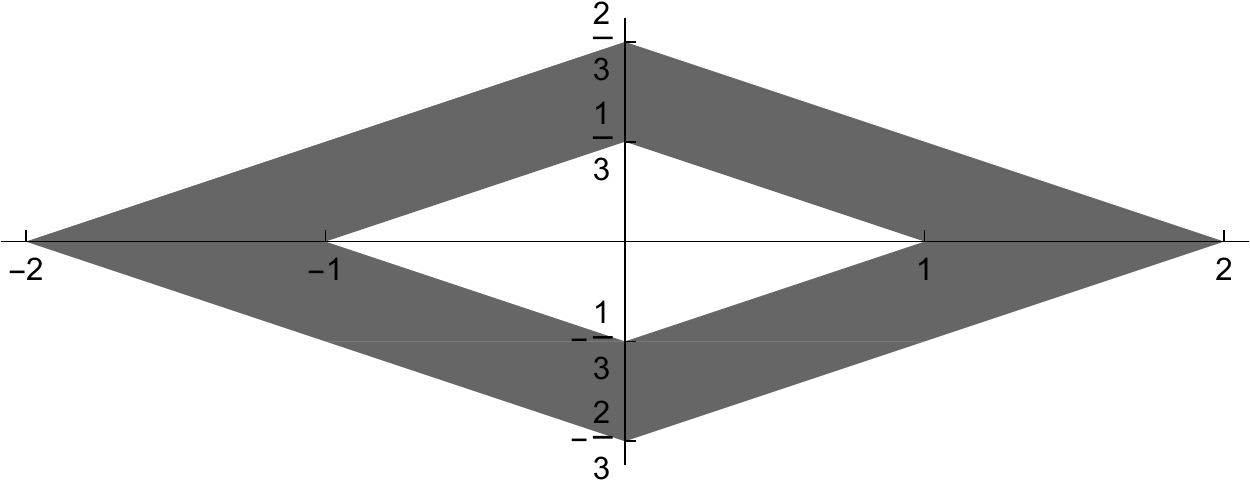}}
\end{picture}&
\setlength{\unitlength}{110bp}
\begin{picture}(1,1)(0,-.1)
\put(0,0){\includegraphics[width=\unitlength]{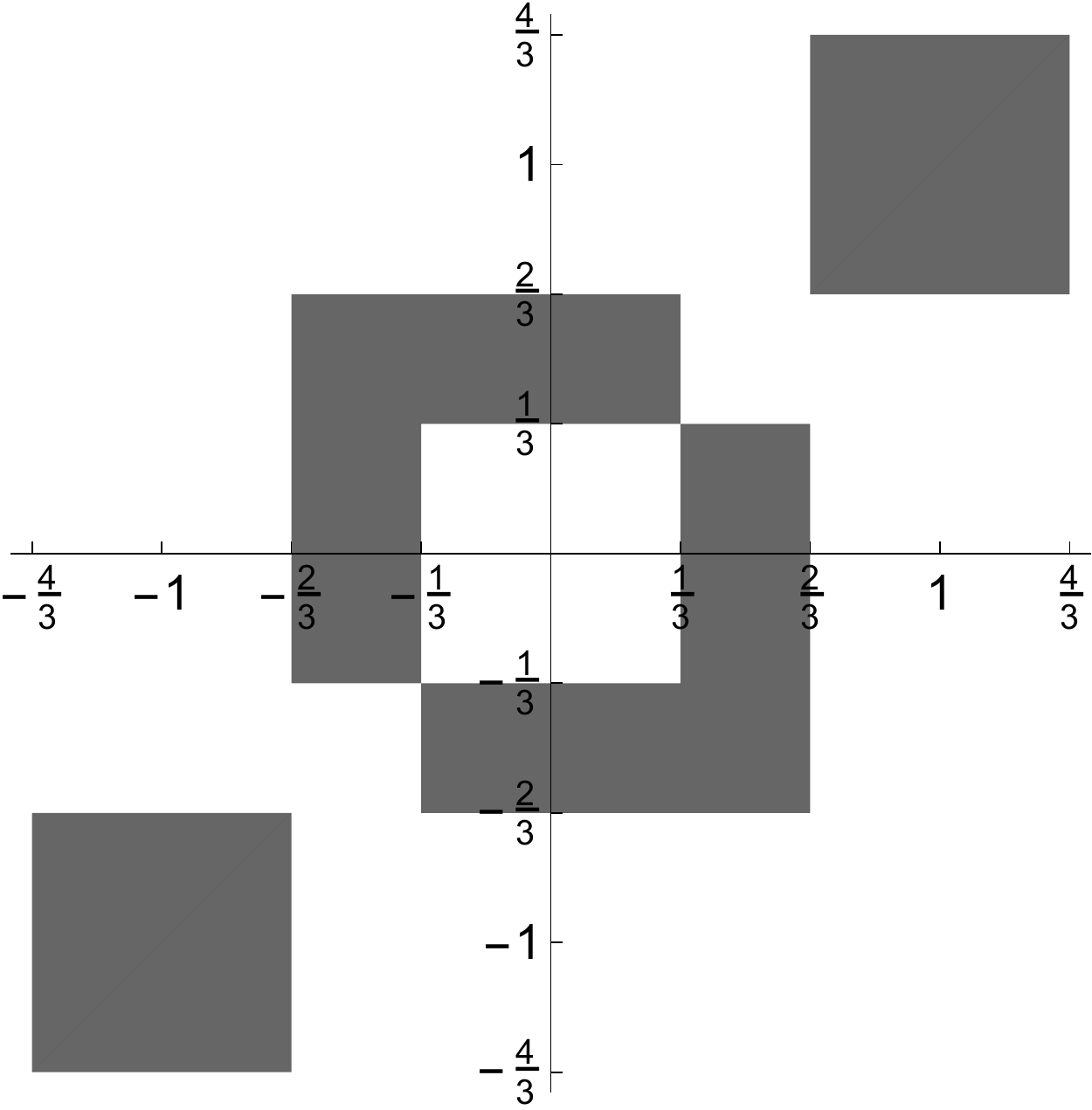}}
\end{picture}
\end{array}$
\label{pgwave}
\caption{Conventional 2-wavelet sets for dilation by 2.}
\label{2wave}
\end{figure}

\par For pg and pm, which we know have the same wavelet sets by Corollary \ref{glide}, we divide the diamond annulus 2-wavelet set on the left in Figure \ref{2wave} into two disjoint pieces, each of which tiles under translation, such that one is the vertical reflection of the other.  Specifically,   
let $W^{\rm pm}={\rm conv}\{(0,\frac13), (0,\frac23), (2,0), (1,0)\}\cup {\rm conv}\{(1,0), (2,0), (\frac12,-\frac12),(0,-\frac13)\}\cup {\rm conv}\{(0,-\frac13),(0,-\frac23),(-\frac12,-\frac12)\}$, as shown on the left in Figure \ref{pgwave}.  For p2, we start with the same 2-wavelet set, and this time divide it into two pieces such that each tiles under translation and one is rotation by $\pi$ of the other.  The resulting wavelet set $W^{\rm p2}$, which is formed by translating the lower left triangle of $W^{\rm pm}$ by $(0,1)$, is shown in the center of Figure \ref{pgwave}. Tiling under translation can be established for each by piecewise integer translating them into $[0,1]\times[-\frac13,\frac23]$. The other conditions of Theorem \ref{wscond} follow from the fact that the lefthand 2-wavelet set in Figure \ref{2wave} is the disjoint union of each of these sets acted on by their point groups.\\     
 \begin{figure}[h]
\centering $\begin{array}{ccc}
 \setlength{\unitlength}{115bp}
\begin{picture}(1,1)(0,-.2)
\put(0,0){\includegraphics[width=\unitlength]{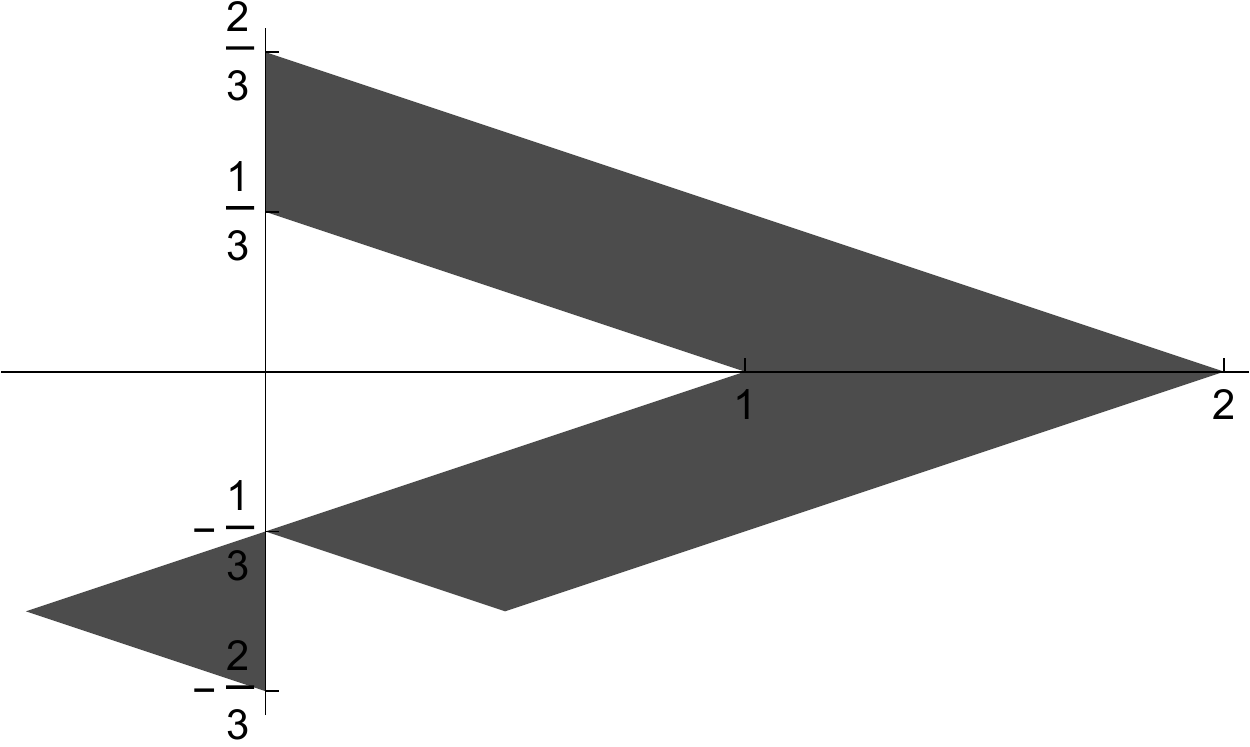}}
\end{picture}&
\setlength{\unitlength}{115bp}
\begin{picture}(1,1)(.05,-.3)
\put(0,0){\includegraphics[width=\unitlength]{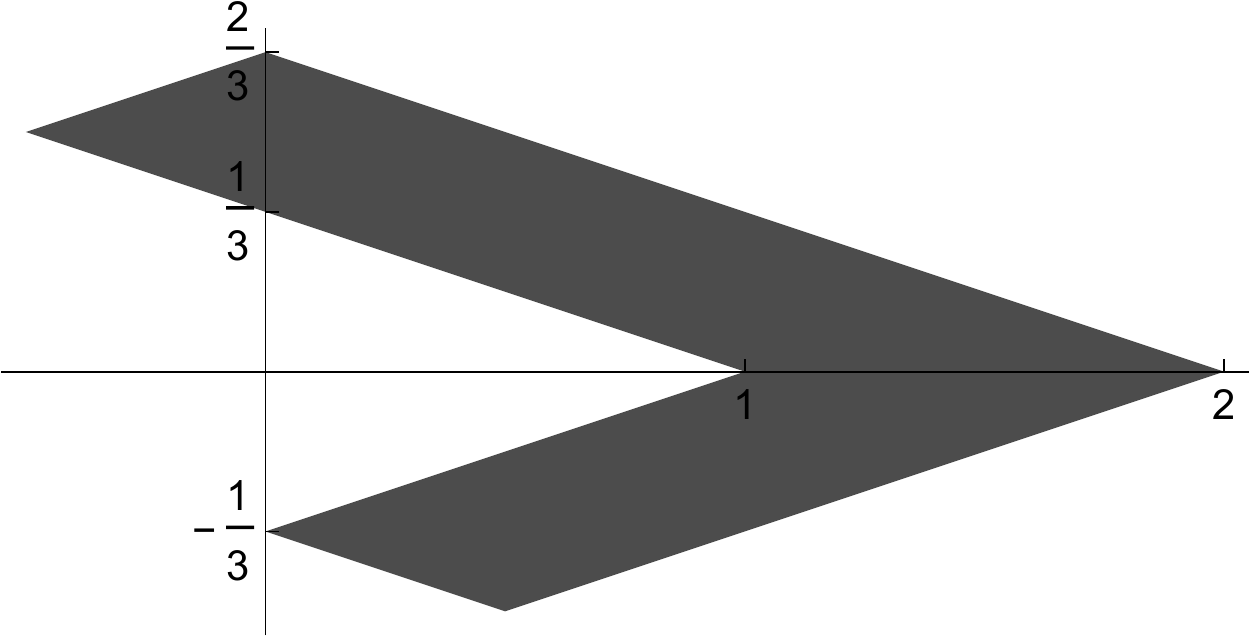}}
\end{picture}&
\setlength{\unitlength}{95bp}
\begin{picture}(1,1)(.05,-.1)
\put(0,0){\includegraphics[width=\unitlength]{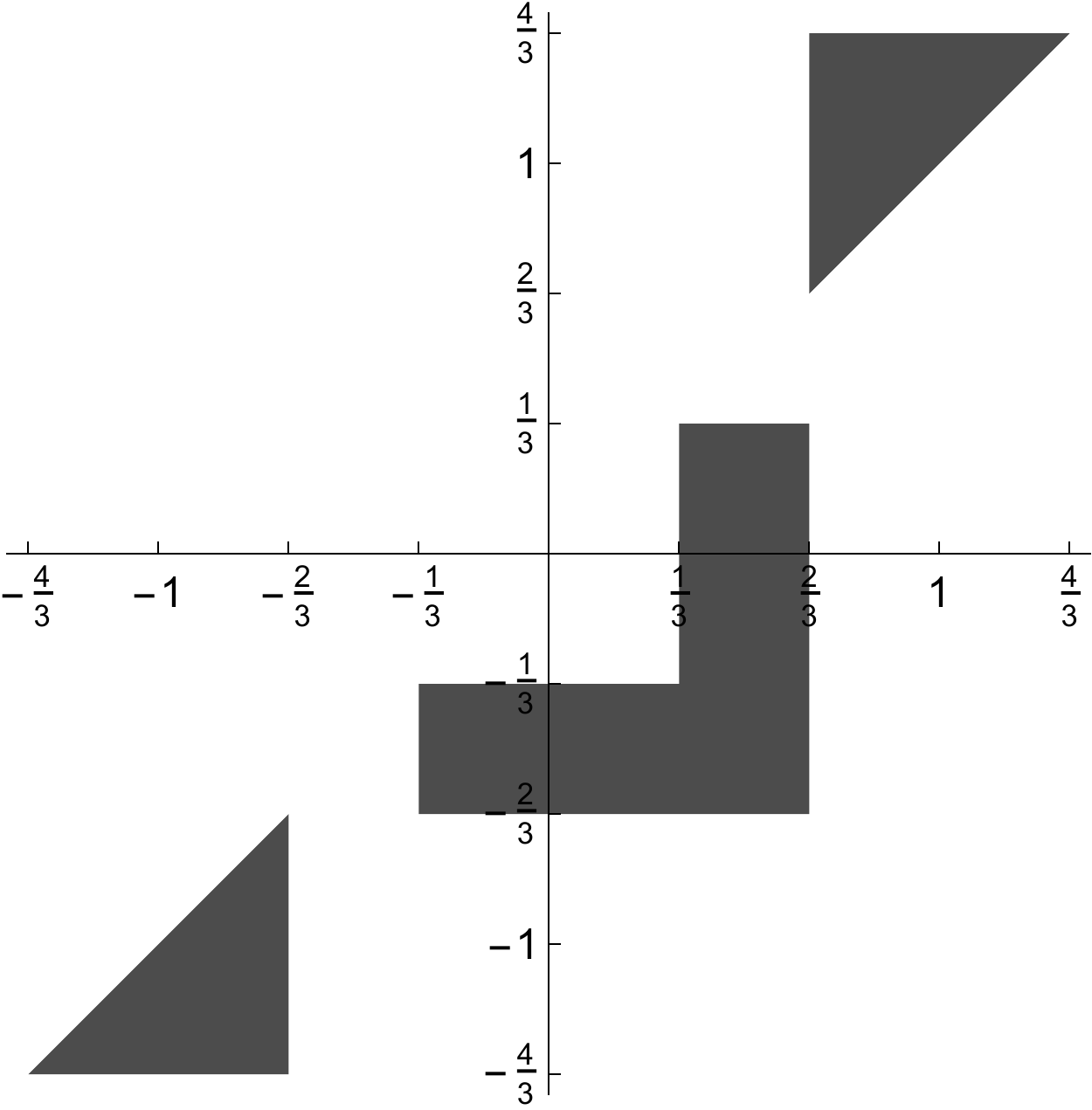}}
\end{picture}\\
W^{\rm pm}&W^{\rm p2}&W^{\rm cm}
\end{array}$
\caption{Simple wavelet sets\index{simple wavelet set} for crystallographic groups with order 2 point groups.}
\label{pgwave}
\end{figure}

Our ability to break this conventional 2-wavelet set into two eligible pieces for the groups pm, pg and p2 required it to be symmetric with respect to the reflection and rotation in those point groups.  For cm, we use instead the conventional 2-wavelet set shown on the right in Figure \ref{2wave}, which is symmetric with respect to $\sigma_{(1,1)}$.  We divide this 2-wavelet set into two disjoint pieces such that each is the reflection of the other, choosing so that each piece tiles\index{tiling} under integer translation.  Specifically, let $W^{\rm cm}={\rm conv}\{(\frac23,\frac23), (\frac23,\frac43), (\frac43,\frac43)\}\cup{\rm conv}\{(-\frac23,-\frac23), (-\frac23,-\frac43), (-\frac43,-\frac43)\}\;\cup\;{\rm conv}\{(-\frac13,-\frac13), (-\frac13,-\frac23), (\frac23,-\frac23),(\frac23,\frac13), (\frac13,\frac13), (\frac13,-\frac13),(-\frac13,-\frac13)\}$.  By Theorem \ref{wscond}, $W^{\rm cm}$, shown on the right in Figure \ref{pgwave},  is a wavelet set for cm.  

\newsavebox{\double}
\savebox{\double}{ $\left(\begin{smallmatrix}
1&0\\
0&2
\end{smallmatrix}\right)$}

We now alter the conventional 2-wavelet set of Figure \ref{2wave} to create a conventional 4-wavelet set that can be used for four of the five crystallographic groups whose point groups have order 4.  To do this, we apply the matrix \usebox{\double} to the 2-wavelet set.  This converts the set of Figure \ref{2wave} into a rhombic annulus that tiles under translation by the lattice spanned by $(1,0)$ and $(0,2)$.   As this new set, shown in the center image of Figure \ref{p4ws}, still tiles under dilation by 2, and 4-tiles under translation by the integers, it is a conventional 4-wavelet for this dilation.  

\par Break this rhombic annulus into four translation tiling pieces that are reflections of one another in the x and y axes, as shown in the center image of Figure \ref{p4ws}, to give a single wavelet set for pmm, pmg, and pgg.  The wavelet set $W^{\rm pmm}$, which is shown on the left of Figure \ref{p4ws}, is of the form ${\rm conv}\{(0,\frac23),(0,1),(\frac12,1),(2,0),(1,0)\}\cup{\rm conv}\{(-\frac12,-1),(0,-1),(0-\frac43)\}$.  To see that $W^{\rm pmm}$ tiles under integer translation, piecewise integer translate it  into $[0,1]^2$.  The set $W^{\rm pmm}$ is thus a wavelet set for pmm, pmg, and pgg since the rhombic 4-wavelet set is the disjoint union of their point group acting on $W^{\rm pmm}$.  The right image in Figure \ref{p4ws} shows $\cup_{S\in\mathcal B^{\rm p4}}S(W^{\rm pmm})$ for p4, which is also a disjoint union that tiles under dilation by 2.  Thus $W^{\rm pmm}$ is also a wavelet set for p4.   The conventional 4-wavelet  set in the center of Figure \ref{p4ws} does not have enough symmetry to give a simple wavelet set for cmm, the final crystallographic group with point group of order 4.  We will present one derived by a different method later in this section.
 
 \begin{figure}[h]
\centering 
$\begin{array}{ccc}
 \setlength{\unitlength}{90bp}
\begin{picture}(1,1)(0,-.1)
\put(0,0){\includegraphics[width=\unitlength]{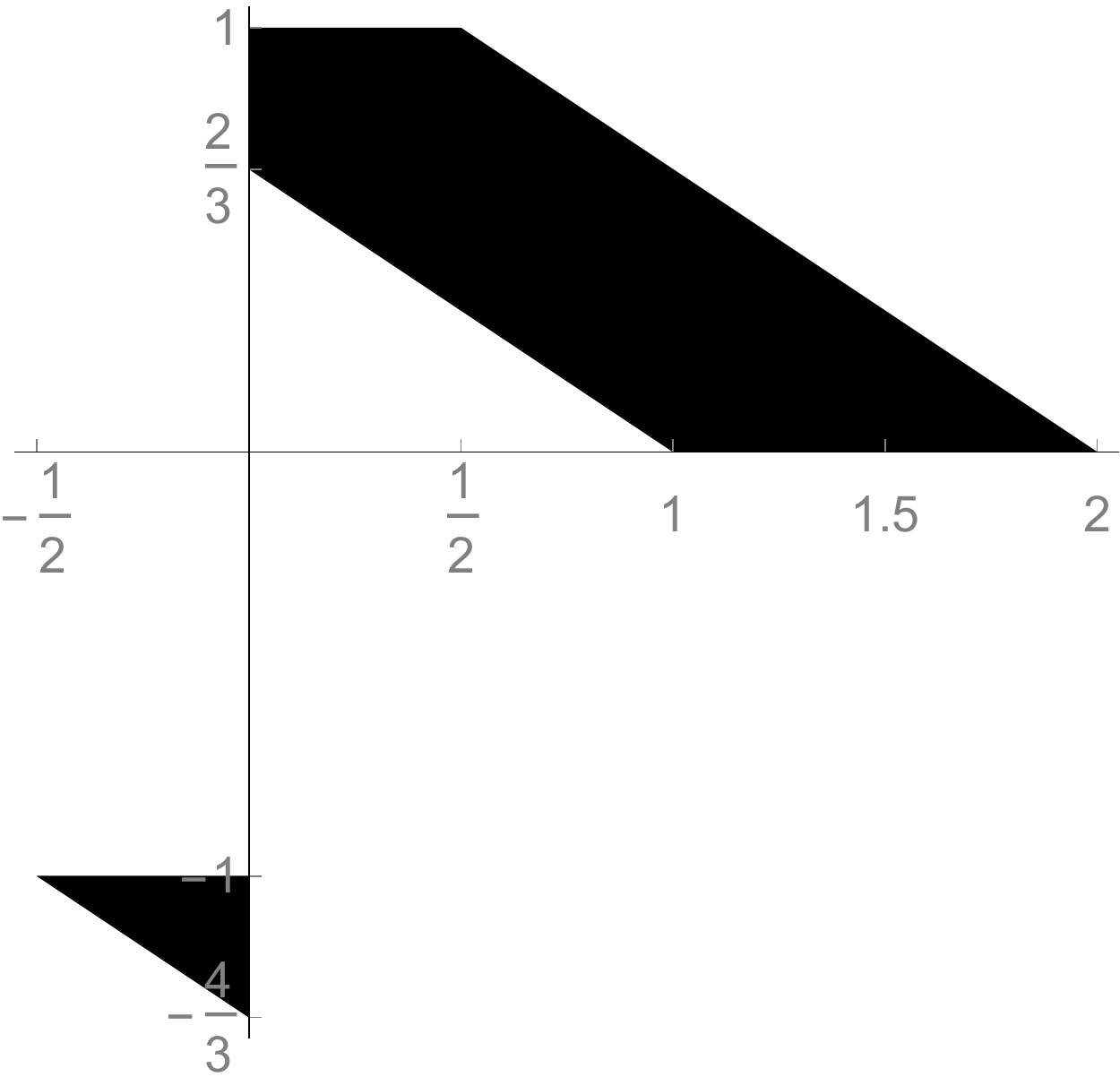}}
\end{picture}&
 \setlength{\unitlength}{120bp}
\begin{picture}(1,1)(0,-.1)
\put(0,0){\includegraphics[width=\unitlength]{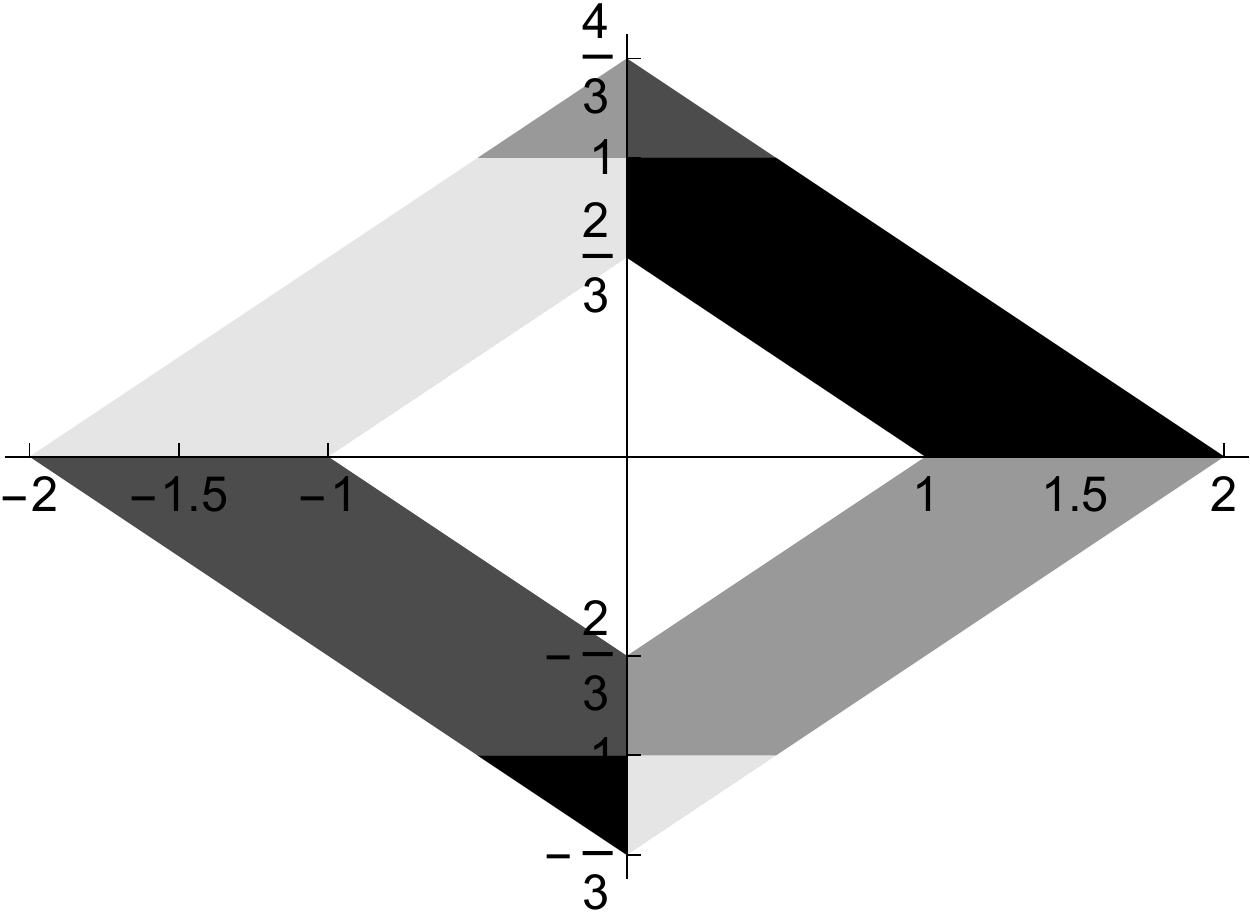}}
\end{picture}&
\setlength{\unitlength}{120bp}
\begin{picture}(1,1)(0,.02)
\put(0,0){\includegraphics[width=\unitlength]{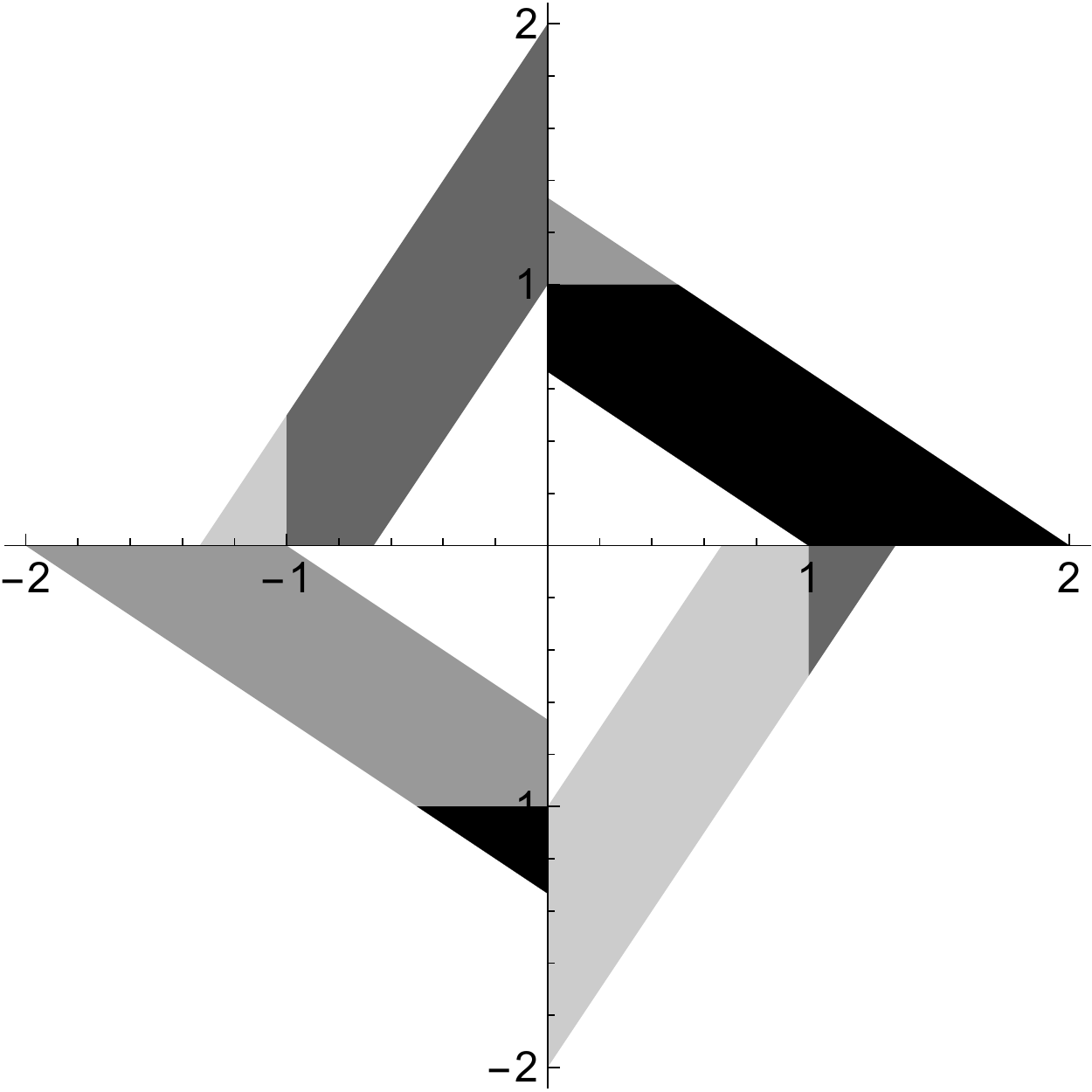}}
\end{picture}\\
W^{\rm pmm}&\cup_{S\in\mathcal B^{\rm pmm}}S(W^{\rm pmm})&\cup_{S\in\mathcal B^{\rm p4}}S(W^{\rm pmm})
\end{array}$
\caption{Simple wavelet set for pmm, pmg, pgg and p4\index{simple wavelet set}}
\label{p4ws}
\end{figure}  

\par 
As in the previous section, the simple wavelet set for the group p4 can be multiplied by the change of lattice\index{lattice} $L=\usebox{\rhomat}$ to give a wavelet set for p6.  The set $W^{\rm p6}=LW^{\rm pmm}$ is shown on the left in Figure \ref{p6wave}, along with its rotations under powers of $\rho_6$ on the right.  The fact that $W^{\rm p6}$ tiles under translation by $\mathcal L$ again follows immediately from the fact that $W^{\rm pmm}$ tiles under translation by $\mathbb Z^2$.  The other two requirements of Theorem \ref{wscond} follow by noting that multiplication by $L$ sends the polar sector $\theta\in[0,\frac{\pi}2]$ to the polar sector $\theta\in[0,\frac {\pi}3]$, and preserves rotation by $\pi$ and dilation by 3.  Unlike in Section \ref{cws}, multiplying $W^{\rm pmm}$ by $L'$ does not produce a wavelet set for p3.  This is because $W^{\rm pmm}$ has a piece that has been rotated by $\pi$ from its first quadrant position, and p3 is not compatible with rotation by $\pi$.   
\begin{figure}[h]
\centering $\begin{array}{cc}
 \setlength{\unitlength}{100bp}
\begin{picture}(1,1)(0,-.3)
\put(0,0){\includegraphics[width=\unitlength]{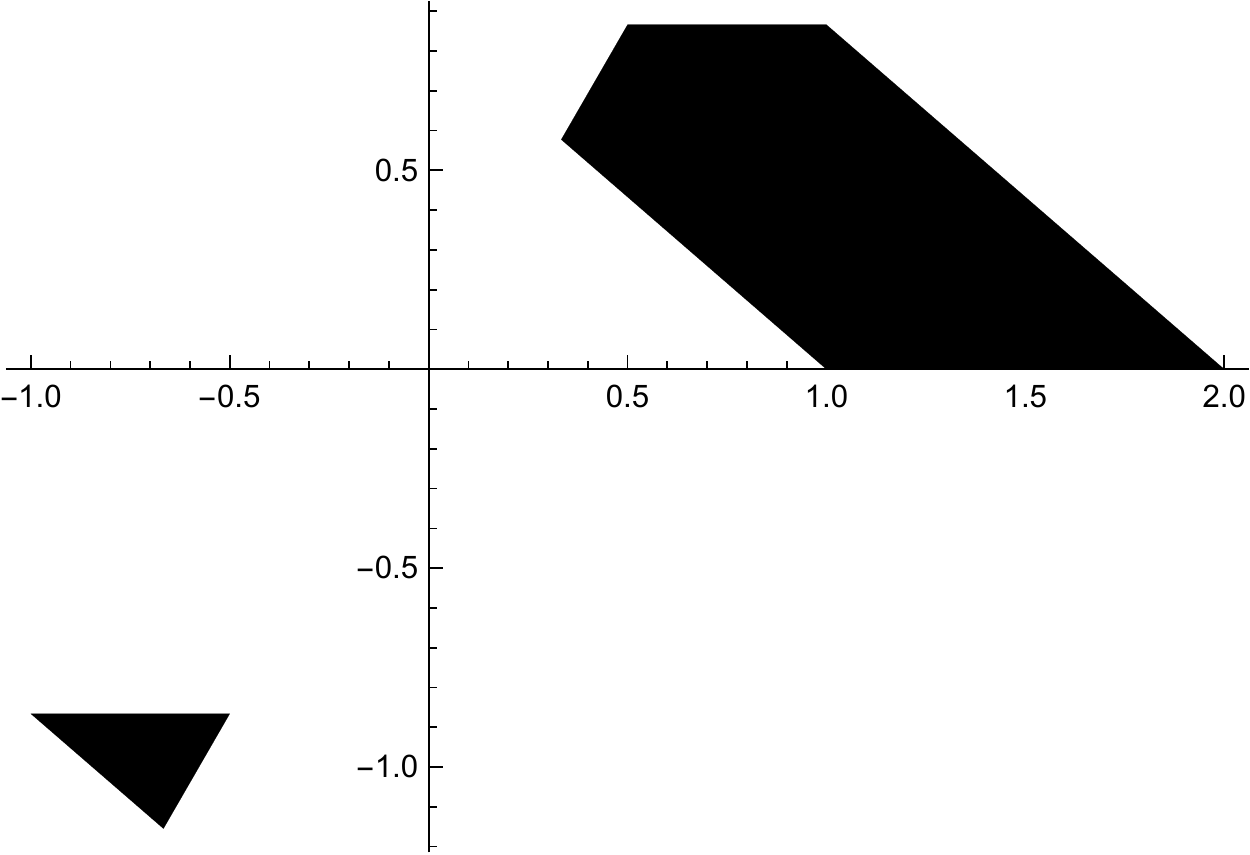}}
\end{picture}&
\setlength{\unitlength}{120bp}
\begin{picture}(1,1)(-.15,-.15)
\put(0,0){\includegraphics[width=\unitlength]{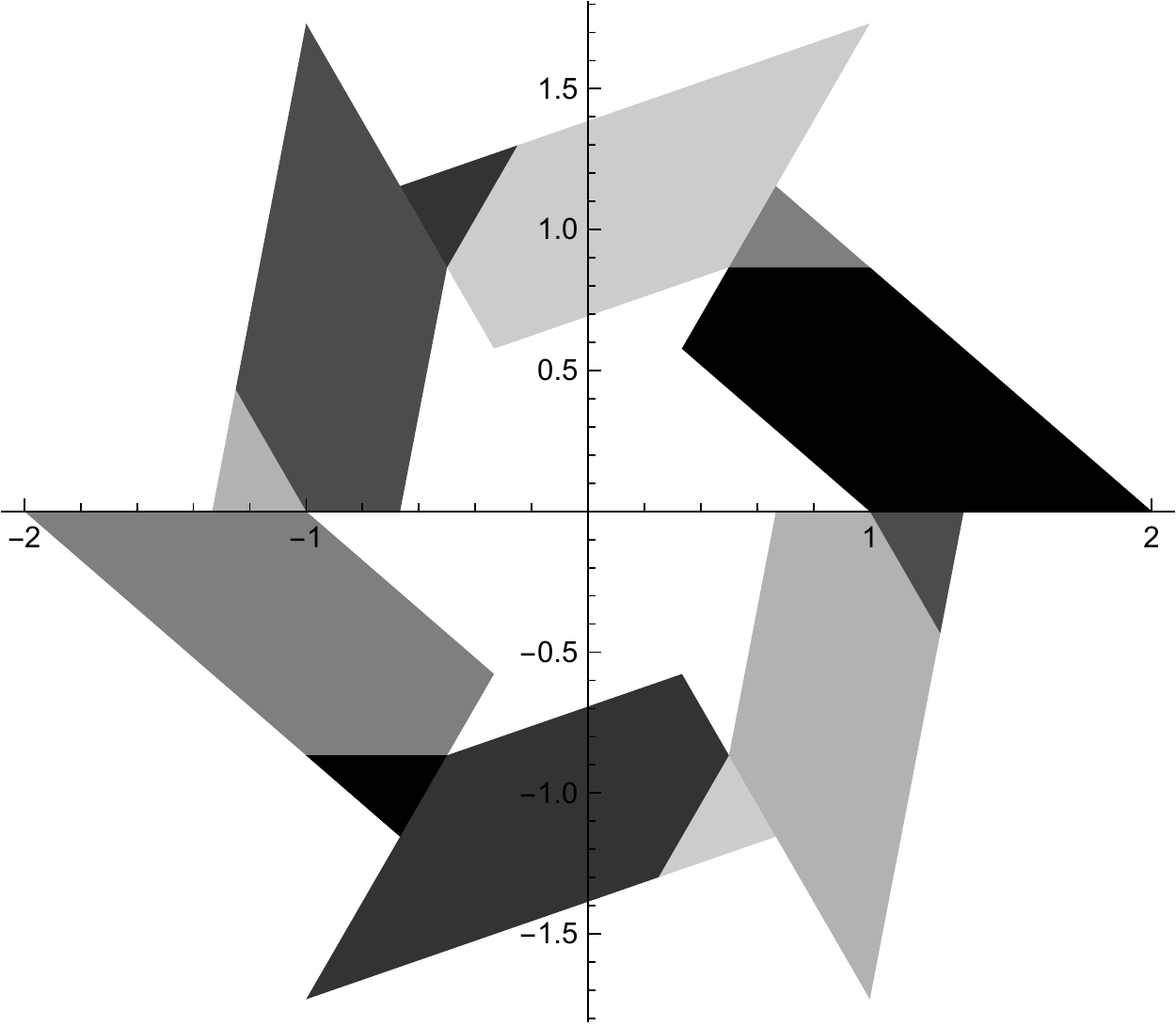}}
\end{picture}
\end{array}$
\caption{Simple wavelet set $W^{\rm p6}=LW^{\rm pmm}$ for p6,  and its 6-fold rotations.}
\label{p6wave}
\end{figure}  
\par To find a simple wavelet set for p3 and also for rest of the remaining seven crystallographic groups, we turn to the conventional first quadrant subspace wavelet set provided by the following lemma. 
\begin{lemma}\label{4square} Let $A=\cup_{j=0}^2 [\frac{2j}3,\frac{2(j+1)}3)]^2$.  Then the set 
$A\setminus \frac12 A$ gives a conventional subspace wavelet set for the first quadrant under dilation by 2.\end{lemma}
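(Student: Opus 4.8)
The plan is to verify the two properties that characterize a subspace wavelet set for $M=$ the first quadrant under dilation $d=2$, namely that the integer translates of $W$ tile $\mathbb R^2$ and that the dilates $\{2^jW\}_{j\in\mathbb Z}$ tile $M$. It is convenient to write $E=\frac12 A=\bigcup_{j=0}^2[\frac j3,\frac{j+1}3]^2$, so that $A=2E$ and $W=A\setminus\frac12 A=2E\setminus E$, with $E\subseteq M$. The dilation condition will come almost for free from the structure of $E$, while the translation condition is the substantive step.

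For the dilation tiling I would first record the nesting $E\subseteq 2E=A$, which is immediate since each of the three side-$\frac13$ squares of $E$ sits inside one of the side-$\frac23$ squares of $A$. Hence $\{2^jE\}_{j\in\mathbb Z}$ is an increasing family of sets. Because $[0,\frac13]^2\subseteq E$ is a corner neighbourhood of the origin, every $\xi\in M$ satisfies $2^{-j}\xi\in[0,\frac13]^2\subseteq E$ for all large $j$, so $\bigcup_{j}2^jE=M$ up to measure zero; likewise $\bigcap_j 2^jE=\{0\}$ is null. Consequently the annuli $2^jW=2^{j+1}E\setminus 2^jE$ are pairwise disjoint and exhaust $M$ a.e., which gives $\sum_{j\in\mathbb Z}{\bf 1}_W(2^j\xi)=1$ for a.e. $\xi\in M$. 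Since $W\subseteq M$ and $M$ is a dilation-invariant cone, the sum vanishes off $M$, so the dilates of $W$ tile $M$.

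For the translation tiling, which is the heart of the lemma, I would decompose $W$ into nine axis-parallel squares of side $\frac13$: the two off-diagonal squares of $[0,\frac23]^2$, the three squares forming the $L$-shaped region $[\frac23,\frac43]^2\setminus[\frac23,1]^2$, and the four squares filling $[\frac43,2]^2$. Their total area is $9\cdot\frac19=1$, as a fundamental domain requires. I would then reduce each square modulo $\mathbb Z^2$ into $[0,1)^2$ and check that the nine images are exactly the nine cells $[\frac a3,\frac{a+1}3]\times[\frac b3,\frac{b+1}3]$, $a,b\in\{0,1,2\}$, of the $3\times3$ grid partition of the unit square, each occurring once. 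This yields $\sum_{k\in\mathbb Z^2}{\bf 1}_W(\xi+k)=1$ a.e.

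The main obstacle is precisely this bookkeeping in the translation step: the squares of $W$ straddle the integer lines $x=1$ and $y=1$, and one must confirm that after integer translation they reassemble into a single fundamental domain with neither overlap nor gap. Once this is verified, both conditions of the subspace characterization hold, and $W=A\setminus\frac12 A$ is a conventional subspace wavelet set for the first quadrant under dilation by $2$.
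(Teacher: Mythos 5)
Your proof is correct, but its key step takes a genuinely different route from the paper. For the translation tiling, you decompose $W=A\setminus\frac12 A$ into nine side-$\frac13$ squares and verify directly that their reductions modulo $\mathbb Z^2$ cover each of the nine cells of the $3\times3$ grid of $[0,1)^2$ exactly once; this bookkeeping does check out: in units of thirds the nine squares sit at index pairs $(0,1)$, $(1,0)$, $(2,3)$, $(3,3)$, $(3,2)$, $(4,4)$, $(4,5)$, $(5,4)$, $(5,5)$, which reduce mod $3$ to the nine distinct pairs in $\{0,1,2\}^2$, so there is neither overlap nor gap. The paper instead argues Fourier-analytically: since $W$ has measure $1$, it tiles under $\mathbb Z^2$ if and only if $\widehat{{\bf 1}}_{W}$ vanishes on $\mathbb Z^2\setminus\{0\}$ (the classical criterion, citing Kolountzakis); it computes $\widehat{{\bf 1}}_A(k,\ell)$ in closed form, notes that the geometric sum $\sum_{j=0}^2 e^{-4\pi i j(k+\ell)/3}$ annihilates everything unless $k+\ell\in3\mathbb Z$, and in that remaining case exhibits a vanishing trigonometric factor in $\widehat{{\bf 1}}_A-\widehat{{\bf 1}}_{\frac12 A}$. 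Your geometric argument is more elementary and checkable by inspection, and it is in fact the same ``piecewise integer translation into $[0,1]^2$'' technique the paper itself uses for its other simple wavelet sets such as $W^{\rm pmm}$ and $W^{\rm p4m}$; the paper's Fourier method is more systematic and would extend to configurations whose pieces do not align with a rational grid, where a finite cell count is unavailable. The dilation halves of the two proofs coincide, both resting on the nesting $\frac12 A\subset A$; you merely spell out the telescoping $\bigcup_j 2^jE=M$ a.e.\ and $\bigcap_j 2^jE$ null that the paper leaves implicit.
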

 \begin{proof}
Since $\frac12 A\subset A$, the set $A\setminus \frac12 A$ tiles\index{tiling} under dilation by 2.  Note that $A\setminus \frac12 A$ has area equal to $1$.  Thus, using a classical result from harmonic analysis (see e.g.\cite{kou}), we can establish tiling under integer translation by showing that $\widehat{\bf 1}_{A\setminus\frac12 A}$ vanishes on $\mathbb Z^2\setminus {0}$. Assume $k,\ell$ are both nonzero.  (The argument for one nonzero is similar.)  We have
 \begin{equation}
 \widehat{\bf 1}_A(k,\ell)=\frac{e^{-\frac23i\pi(k+\ell)}\sin(\frac{2k\pi}3)\sin(\frac{2\ell\pi}3)}{k\ell\pi^2}\sum_{j=0}^2 e^{\frac{-2\pi i 2j(k+\ell)}3}
 \end{equation}
 The sum at the end of this expression is 0 except when $k+\ell\in3\mathbb Z.$  The formula for $\widehat{\bf 1}_{\frac12 A}(k,\ell)$ is similar (with $\frac23$ replaced by $\frac13$), and thus is also 0 except when $k+\ell\in3\mathbb Z.$  Thus, it remains to address the case  $k+\ell\in3\mathbb Z.$
In this case, write $\ell=3n-k$ and note that $\widehat{\bf 1}_A(k,\ell)-\widehat{\bf 1}_{\frac12 A}(k,\ell)$ has a factor of  
 $\sin(\frac{2k\pi}3)\sin(\frac{2(3n-k)\pi}3)-e^{-i\pi n}\sin(\frac{k\pi}3)\sin(\frac{(3n-k)\pi}3)$, which is 0.
 \end{proof}
 
This conventional first quadrant subspace wavelet set $W^{\rm p4}$, shown on the left in Figure \ref{p3ws}, gives an alternative simple crystallographic wavelet set for p4, pmm, pmg, and pgg.  (To differentiate it from the previous simple wavelet set for these groups, we name it $W^{\rm p4}$.)  Unlike $W^{\rm pmm}$, it can be multiplied by the matrix $L'$, as in the previous section, to give a simple wavelet set for p3.  $W^{\rm p3}$ is shown in the center image of Figure \ref{p3ws}, and together with its rotates on the right.  We will see that variations of the set $W^{\rm p4}$ can also be used to create simple dilation 2 wavelet sets for all the remaining crystallographic groups.

 \begin{figure}[h]
\centering 
$\begin{array}{ccc}
 \setlength{\unitlength}{80bp}
\begin{picture}(1,1)(.35,0)
\put(0,0){\includegraphics[width=\unitlength]{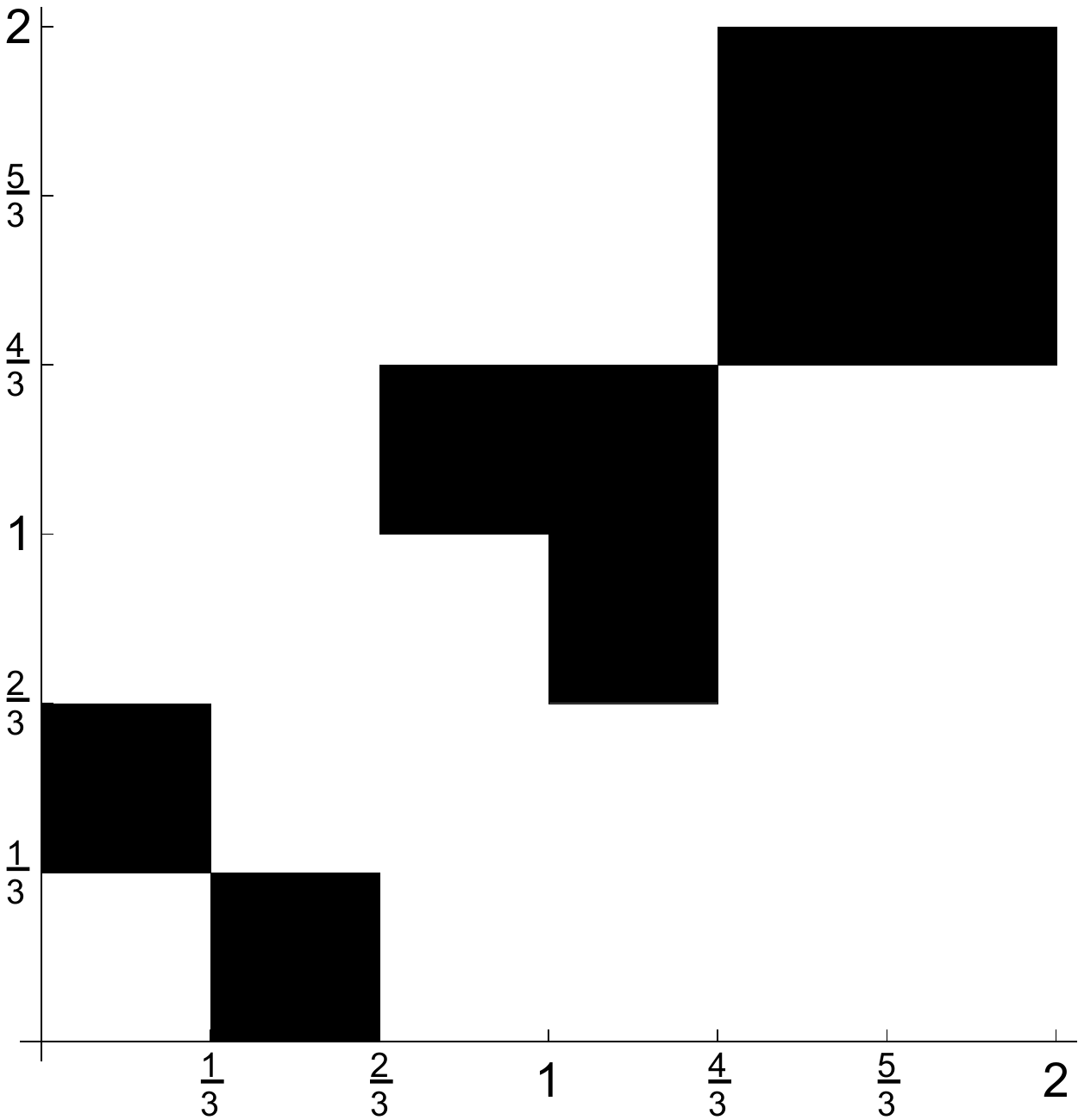}}
\end{picture}&
 \setlength{\unitlength}{70bp}
\begin{picture}(1,1)(-.2,-.1)
\put(0,0){\includegraphics[width=\unitlength]{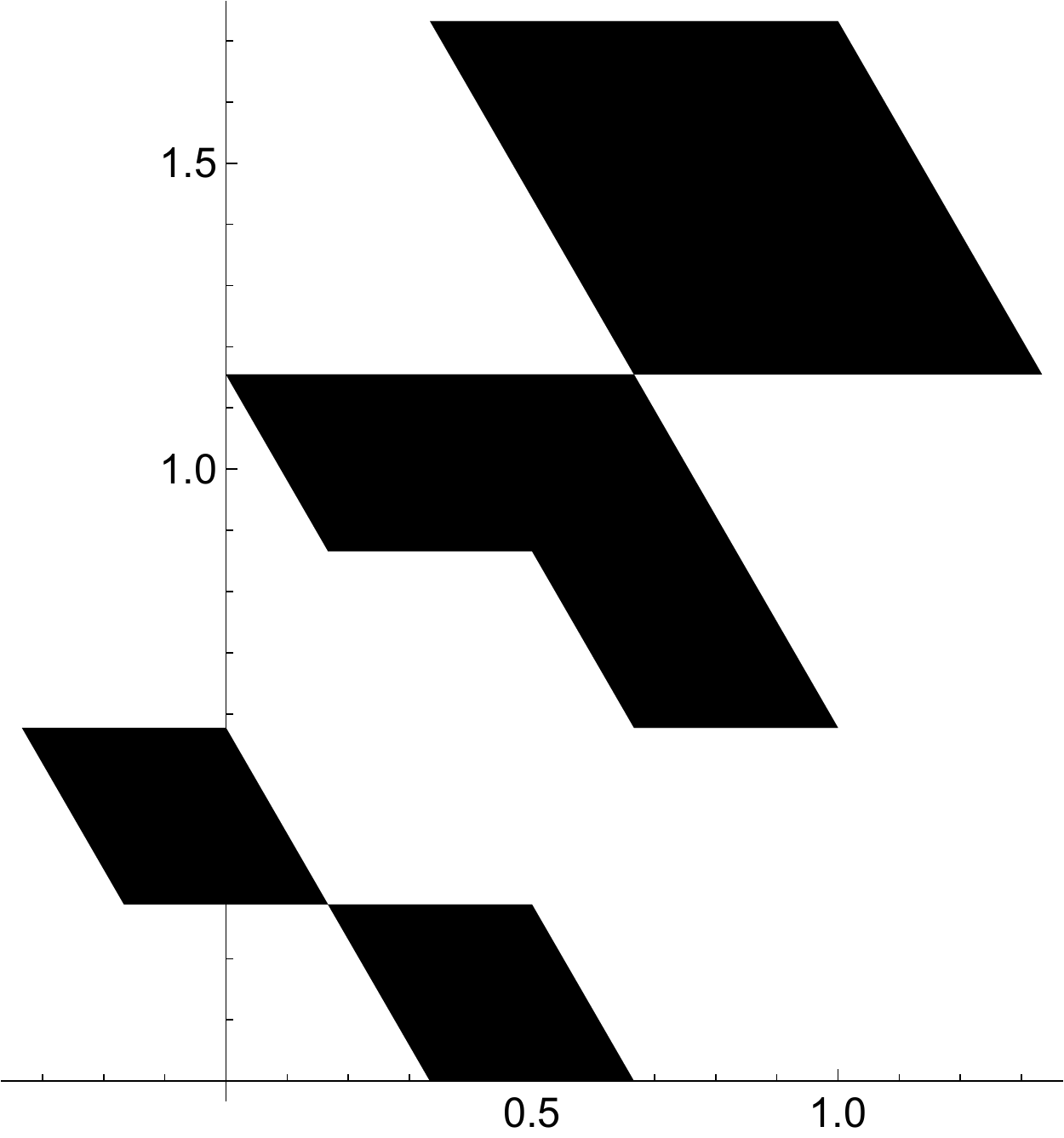}}
\end{picture}&
\setlength{\unitlength}{90bp}
\begin{picture}(1,1)(-.35,.0)
\put(0,0){\includegraphics[width=\unitlength]{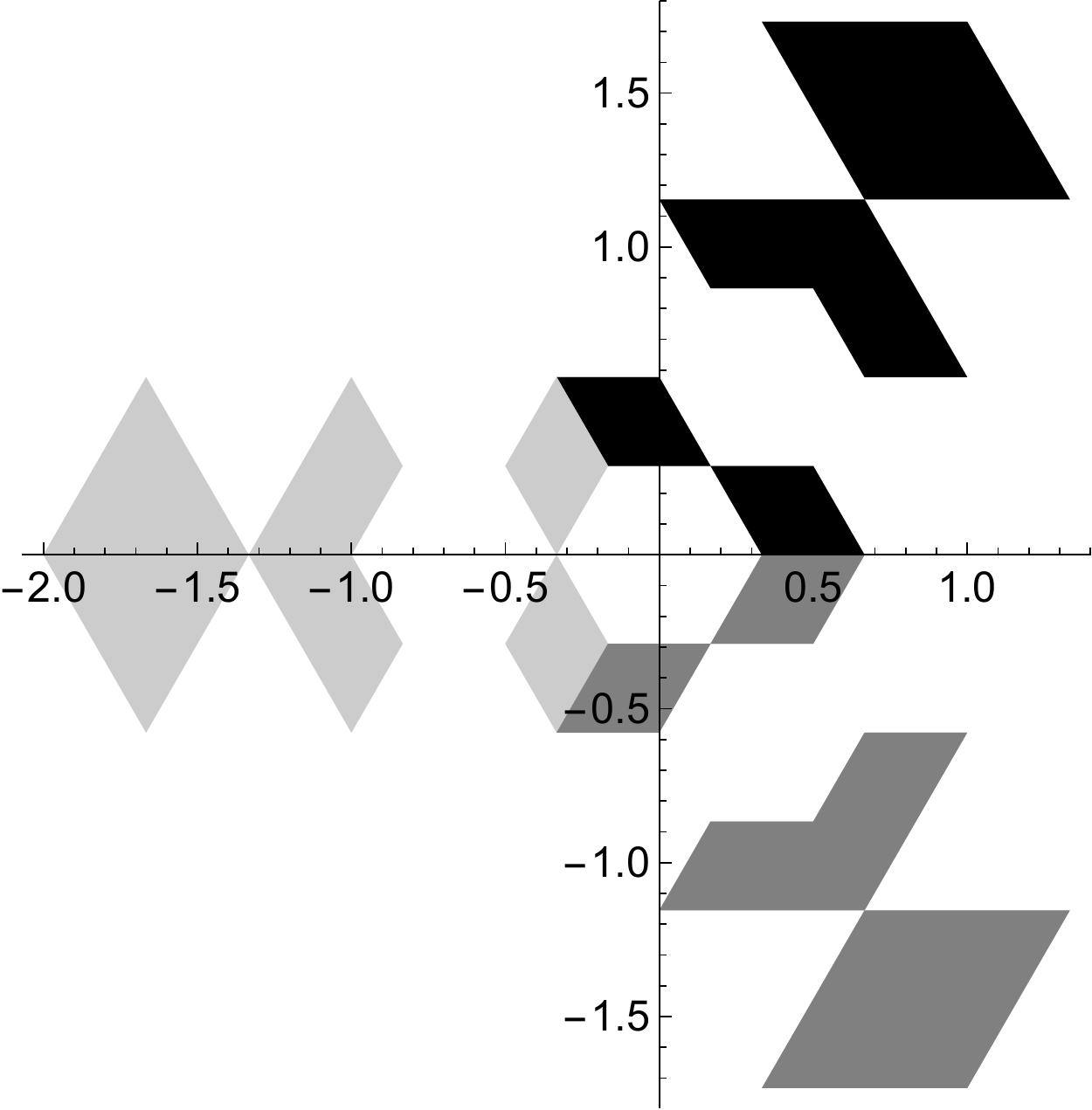}}
\end{picture}\\
W^{\rm p4}&W^{\rm p3}=L'(W^{\rm p4})&\hskip.7in\cup_{S\in\mathcal B^{\rm p3}}S(W^{\rm p3})
\end{array}$
\caption{Alternative simple wavelet set $W^{\rm p4}$ for pmm, pmg, pgg, and p4; and simple wavelet set for p3 together with its rotations.\index{simple wavelet set}}
\label{p3ws}
\end{figure} 

First, to find a wavelet set for the group cmm, we divide $W^{\rm p4}$ into two sets: the portion $W^{\rm p4}_a$, which has integer translates that fill the upper and lower quarter triangles of the unit square $[0,1]^2$ (shown in black on the left in Figure \ref{cmmws}), and the portion $W^{\rm p4}_b$, which has integer translates that fill the left and right quarter triangles (shown in gray).  We then reflect $W^{\rm p4}_b$ over the $x-$axis into the fourth quadrant.  Since the $\sigma_{(1,0)}W^{\rm p4}_b$ also has integer translates filling the left and right quarter triangles, the set $W^{\rm cmm}=W^{\rm p4}_a\cup\sigma_{(1,0)}W^{\rm p4}_b$, shown in the center image of Figure \ref{cmmws}, tiles\index{tiling} under translation by the integers.  The right image of Figure \ref{cmmws} shows the disjoint union of $W^{\rm cmm}$ and its reflections in the diagonals, which tiles the plane under dilation by 2.  Thus $W^{\rm cmm}$ is a simple wavelet set for cmm.     
\begin{figure}[h]
\centering $\begin{array}{ccc}
 \setlength{\unitlength}{75bp}
\begin{picture}(1,1)(0.3,-.2)
\put(0,0){\includegraphics[width=\unitlength]{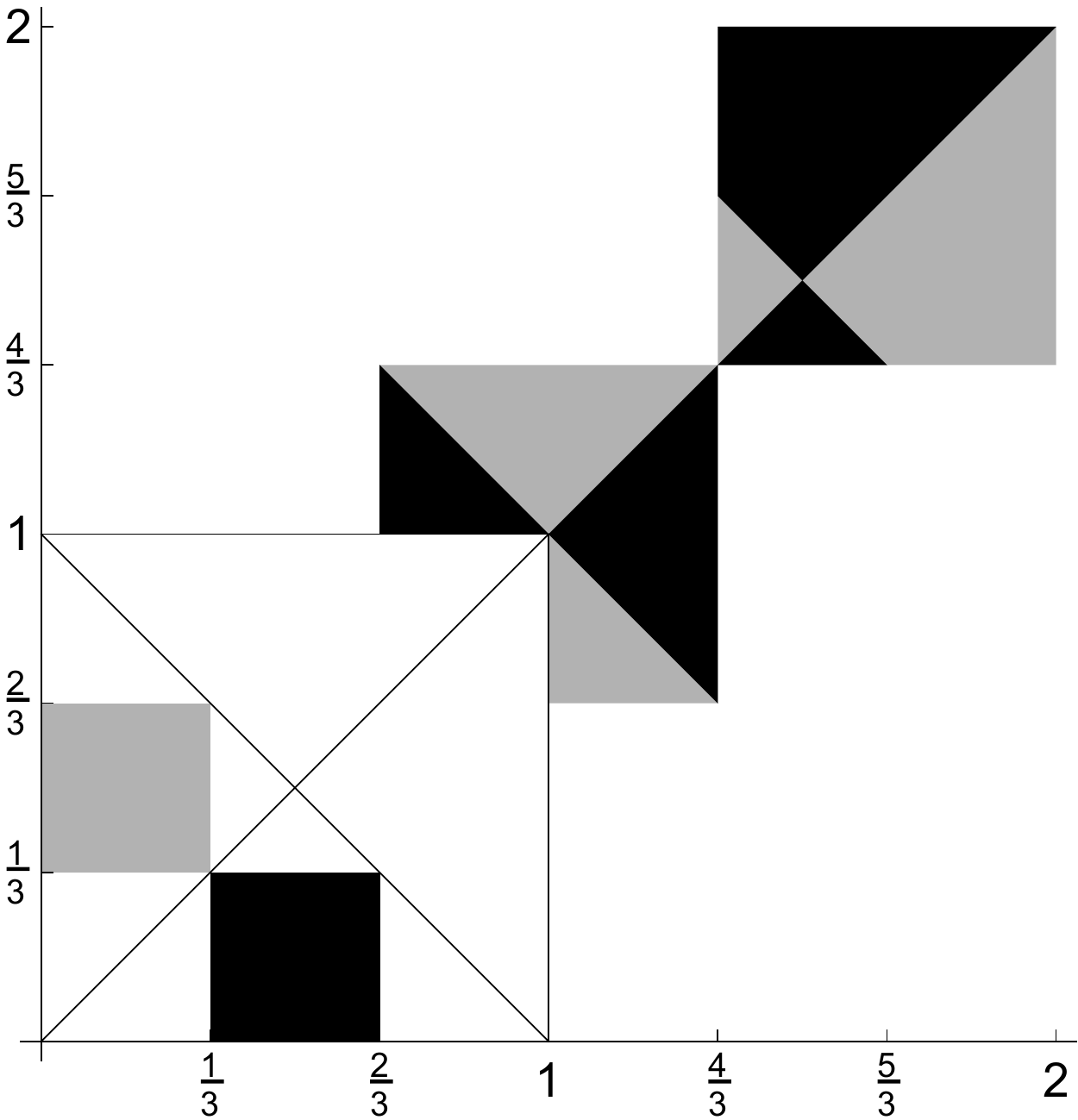}}
\end{picture}&
\setlength{\unitlength}{60bp}
\begin{picture}(1,1)(.3,0)
\put(0,0){\includegraphics[width=\unitlength]{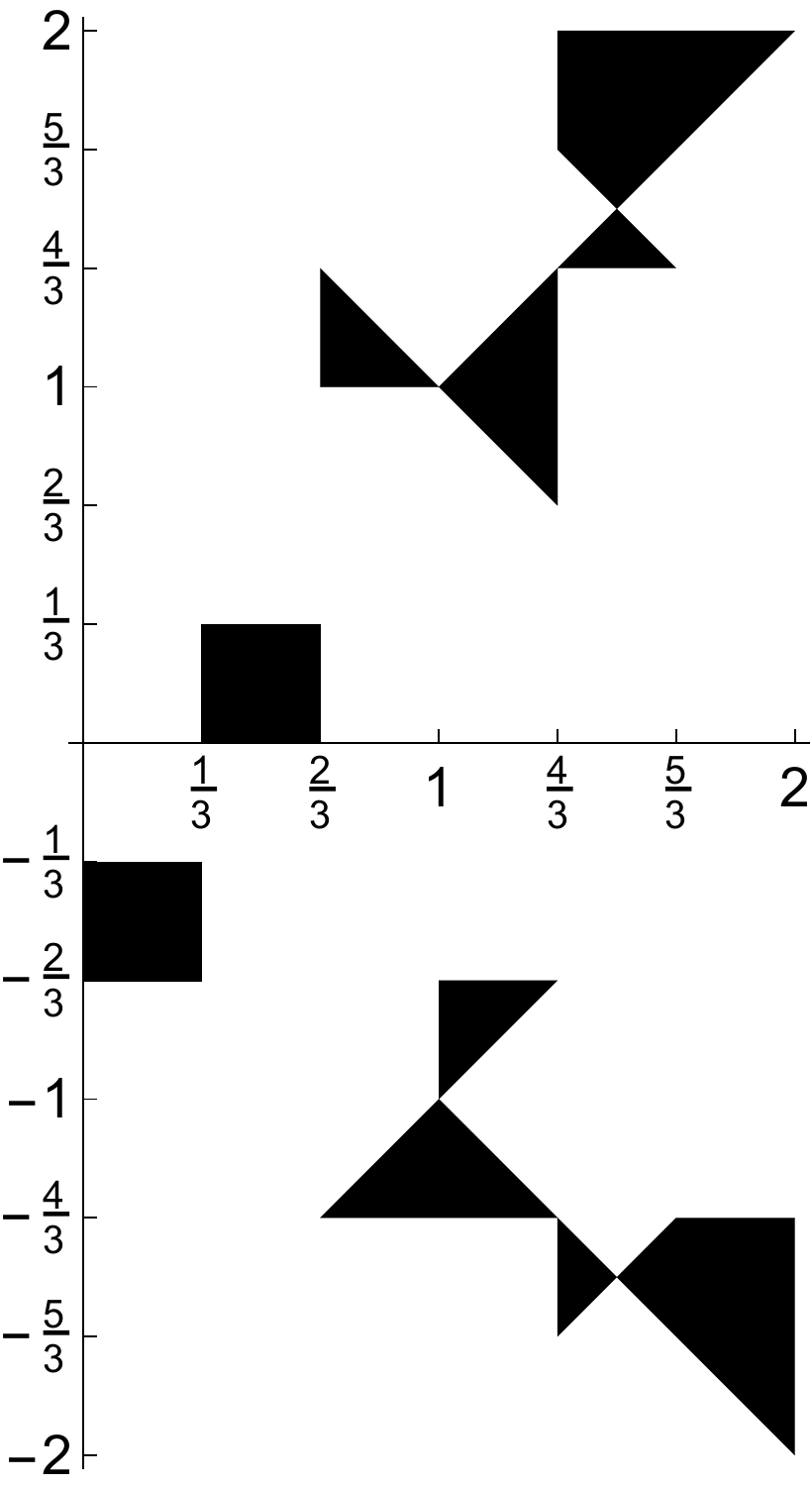}}
\end{picture}&
\setlength{\unitlength}{100bp}
\begin{picture}(1,1)(0,-.1)
\put(0,0){\includegraphics[width=\unitlength]{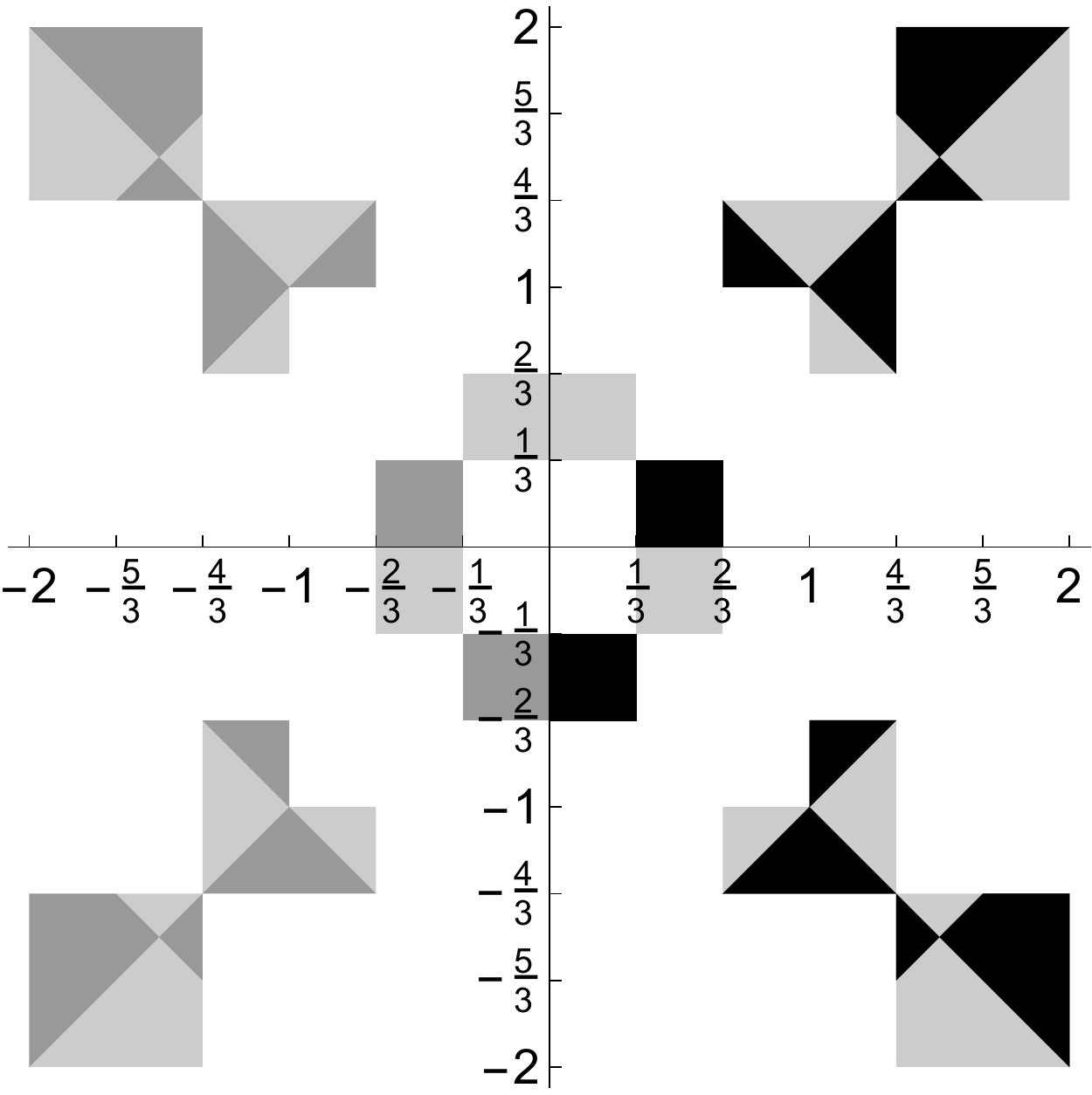}}
\end{picture}\\
W^{\rm p4}=W^{\rm p4}_a\cup W^{\rm p4}_b\hskip.2in&W^{\rm cmm}=W^{\rm p4}_a\cup \sigma_{(1,0)}W^{\rm p4}_b\hskip.2in&\cup_{S\in\mathcal B^{\rm cmm}} SW^{\rm cmm} \hskip.2in
\end{array}$
\caption{Building a simple wavelet set for for cmm\index{simple wavelet set}.}
\label{cmmws}
\end{figure}     

Next, we will build wavelet sets for crystallographic groups whose point groups have order 8 out of a conventional subspace 2-wavelet set for the first quadrant.  For this set, we use two copies of $W^{\rm p4}$ along the main diagonal, slightly altered by an integer translation of three of the smaller squares.    Specifically, consider the set $C=\left(A\cup\tau_{(2,2)}A\right)\setminus\frac12\left(A\cup\tau_{(2,2)}A\right)$, where $A$ is as in Lemma \ref{4square} . The set $C$ gives a two fold tiling of the first quadrant under integer translation since it can be formed from integer translations of pieces of two copies of $W^{\rm p4}$.  Since $\frac12\left(A\cup\tau_{(2,2)}A\right)\subset\left(A\cup\tau_{(2,2)}A\right)$, it also tiles under dilation by 2.  This set appears in the first quadrant of the center image in Figure \ref{p4gwave}.  

The simple wavelet set for p4m and p4g is formed by taking $W^{\rm p4m}$ to be a set that tiles the first quadrant under integer translation and has the disjoint union $W^{\rm p4m}\cup\sigma_{(1,1)}W^{\rm p4m}=C$.  Specifically, let  $W^{\rm p4m}={\rm conv}\{(0,\frac13), (0,\frac23), (\frac13,\frac23), (\frac13,\frac13)\}\;\cup\;{\rm conv}\{(1,\frac23),(1,1), (\frac43,1),(\frac43,\frac23)\}\;
 \cup\;{\rm conv}\{(\frac43,2), (\frac53,2), (\frac53,\frac53), (\frac43,\frac53)\}\;\cup\;{\rm conv}\{(2,2),\\ (\frac83,2), (\frac83,\frac83)\}\;\cup {\rm conv}\{(\frac83,\frac83),(\frac83,\frac{10}3), (\frac{10}3,\frac{10}3)\}\;\cup\;{\rm conv}\{(\frac{10}3,\frac{10}3),(\frac{10}3,\frac{11}3), (\frac{11}3,\frac{11}3)\}
\; \cup\\
{\rm conv}\{(\frac{11}3,\frac {11}3),(4,4), (4,\frac{10}3),(\frac{11}3,\frac{10}3)\}$.  (See set on the left in Figure \ref{p4gwave}.)  Tiling under integer translation can be established by piecewise integer translating $W^{\rm p4m}$ into $[0,1]^2$.   
The effect of $\sigma_{(1,1)}$ on $W^{\rm p4m}$ is shown in dark gray in the center graph of Figure \ref{p4gwave}, and then the  combined effect of this reflection and the 4-fold rotations in the lighter shades.  A wavelet set for p6m can be found by mapping $W^{\rm p4m}$ by the lattice matrix $L$.  The result, shown on the right in Figure \ref{p4gwave},  meets all the requirements of Theorem \ref{wscond} by a similar argument as used for p6.  
\newsavebox{\sigmadmat}
\savebox{\sigmadmat}{ $\left(\begin{smallmatrix}
0&-1\\
-1&0
\end{smallmatrix}\right)$}
  \begin{figure}[h]
\centering $\begin{array}{ccc}
 \setlength{\unitlength}{90bp}
\begin{picture}(1,1)(.1,-.1)
\put(0,0){\includegraphics[width=\unitlength]{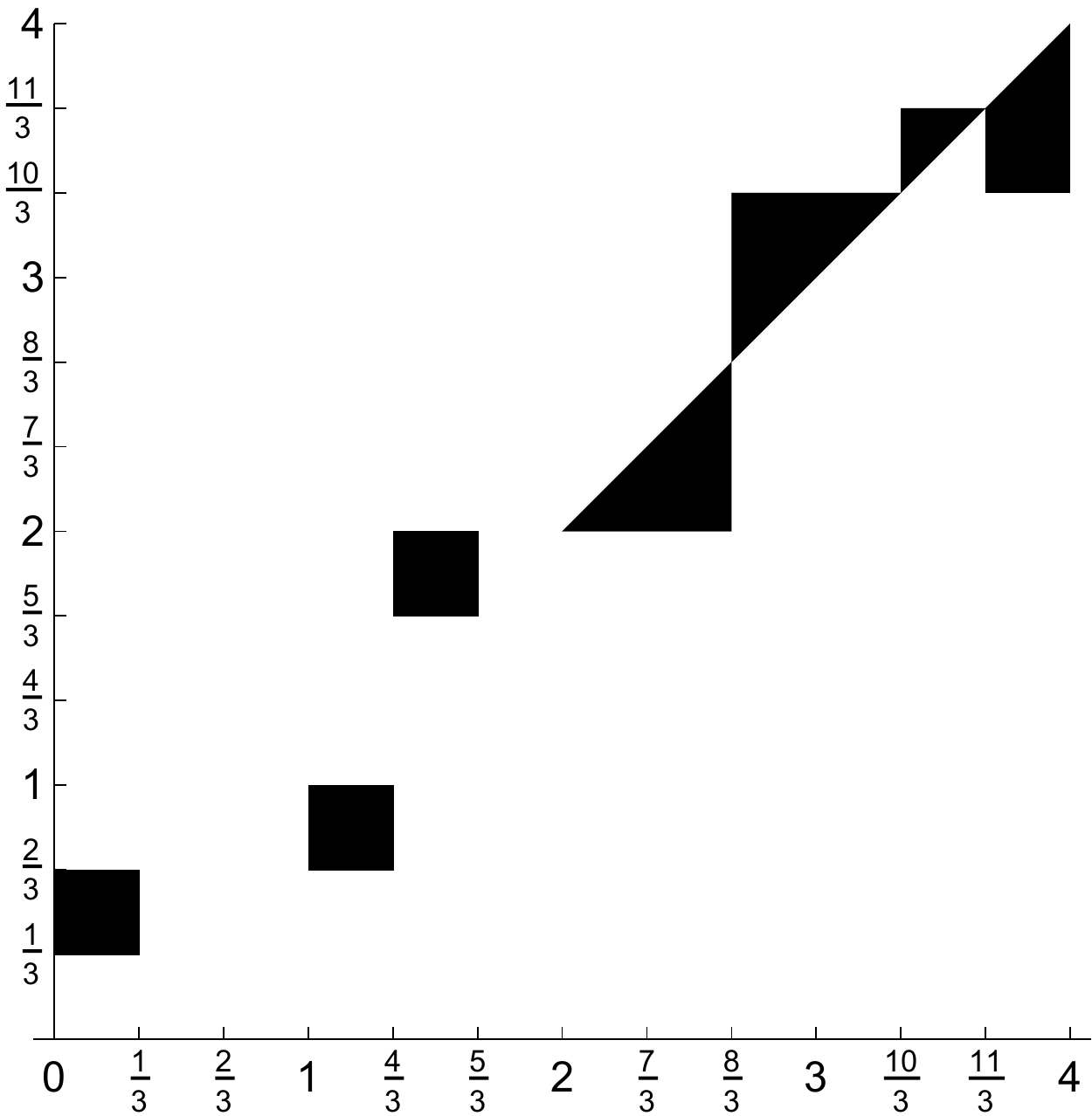}}
\end{picture}&
\setlength{\unitlength}{120bp}
\begin{picture}(1,1)(0,0)
\put(0,0){\includegraphics[width=\unitlength]{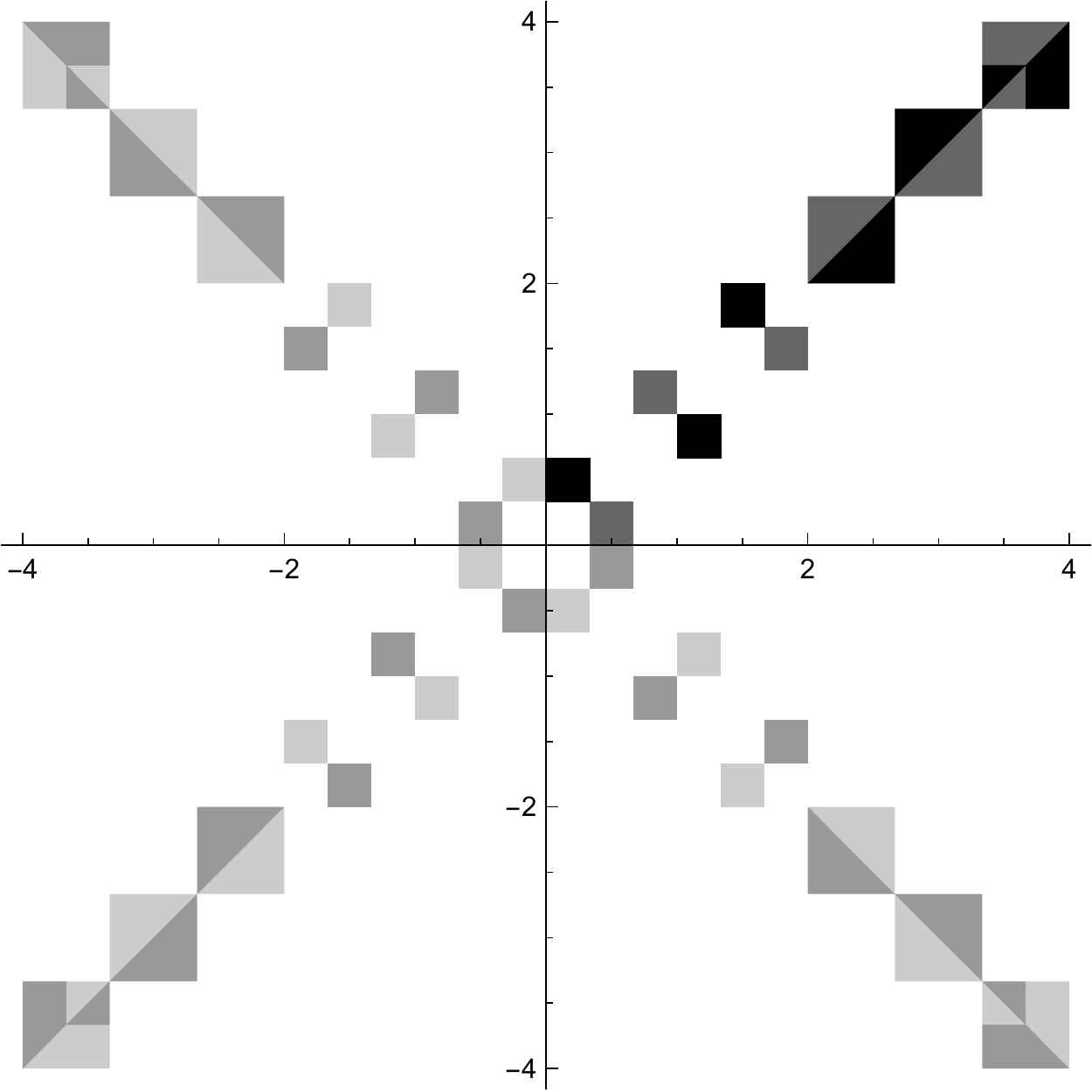}}
\end{picture}&
\setlength{\unitlength}{100bp}
\begin{picture}(1,1)(-.1,-.3)
\put(0,0){\includegraphics[width=\unitlength]{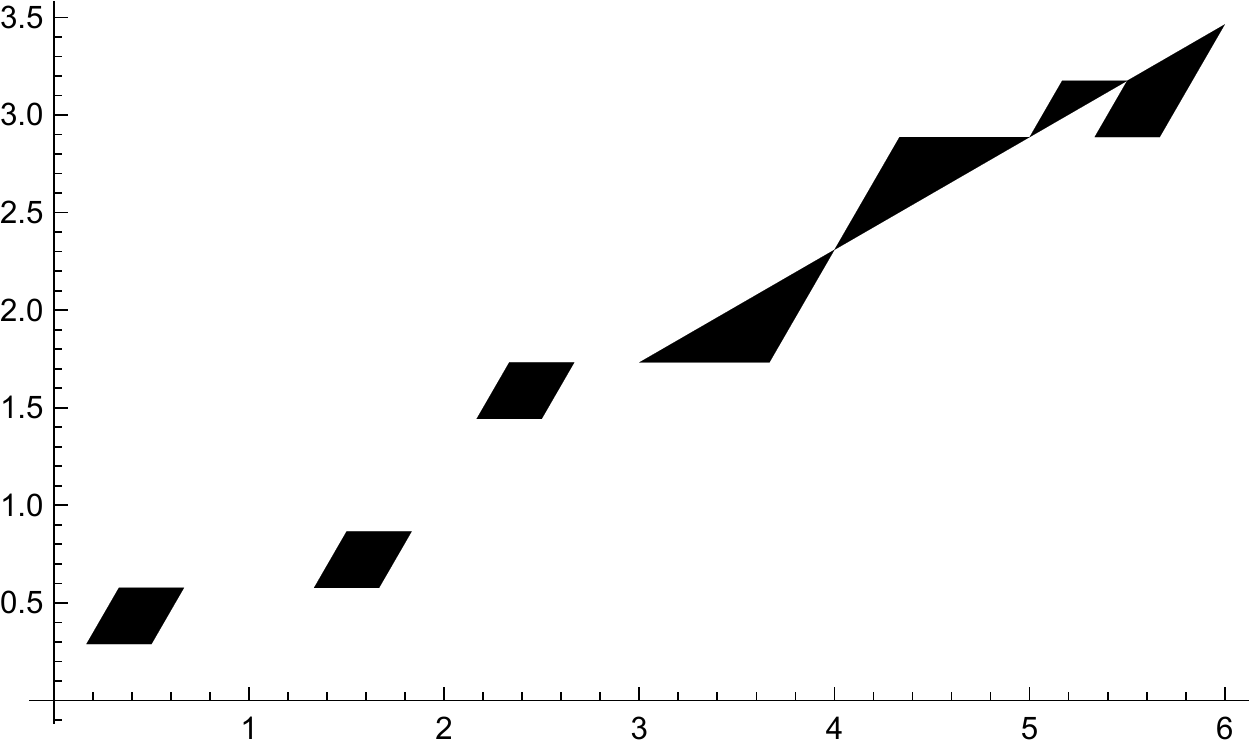}}
\end{picture}\\
W^{\rm p4m}&\cup_{S\in\mathcal B^{\rm p4m}} SW^{\rm p4m}&W^{\rm p6m}=LW^{\rm p4m}
\end{array}$
\caption{Simple wavelet set for p4m and p4g, acted on by point group, and simple wavelet set for p6m\index{simple wavelet set}}
\label{p4gwave}
\end{figure}
\par The wavelet set for p4m can also be multiplied by the lattice\index{lattice} matrix $L'$ to give one for p31m.  Instead, we use a subspace wavelet set for the first and fifth octants to produce a wavelet set for both p31m and p3m1.  To get the subspace wavelet set, we modify the first quadrant conventional subspace wavelet set $W^{\rm p4}$ by deleting the portion in the second octant, and including instead a copy of the first octant rotated by $\pi$. The resulting set $D$, shown on the left in Figure \ref{p31md2}, is easily seen to tile\index{tiling} the plane under integer translation and the first and fifth octants by dilation by 2, and so is a conventional subspace wavelet set for those two octants.  The set $LD$, shown in the center image of  Figure \ref{p31md2} is then a wavelet set for p3m1 and also for p31m. The graph on the right in Figure \ref{p31md2} shows the union of the wavelet set with its reflection and rotations, which dilates to a hexagonal star annulus.  

\begin{figure}[h]
\centering $\begin{array}{ccc}
 \setlength{\unitlength}{95bp}
\begin{picture}(1,1)(0,-.1)
\put(0,0){\includegraphics[width=\unitlength]{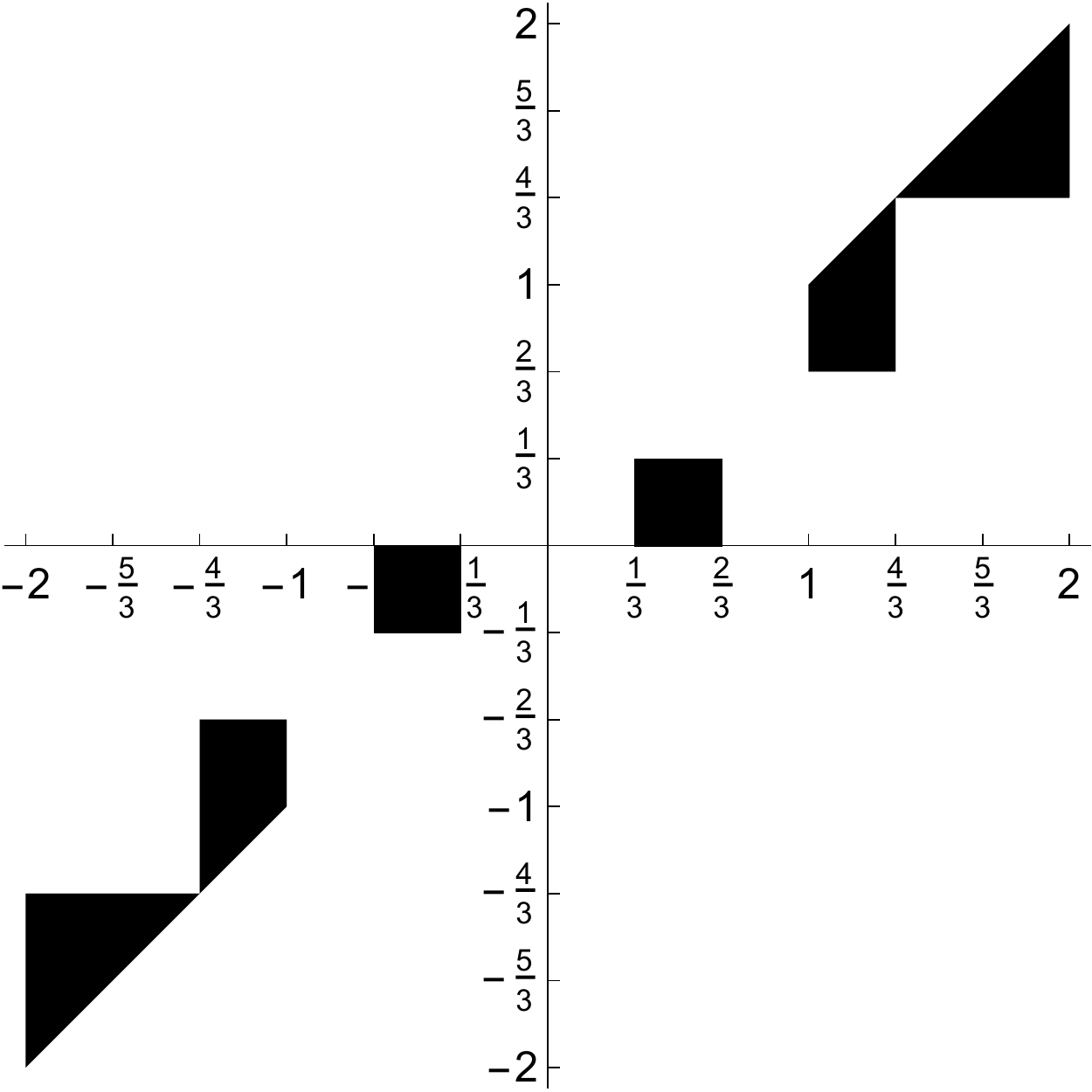}}
\end{picture}&
\setlength{\unitlength}{115bp}
\begin{picture}(1,1)(0,-.25)
\put(0,0){\includegraphics[width=\unitlength]{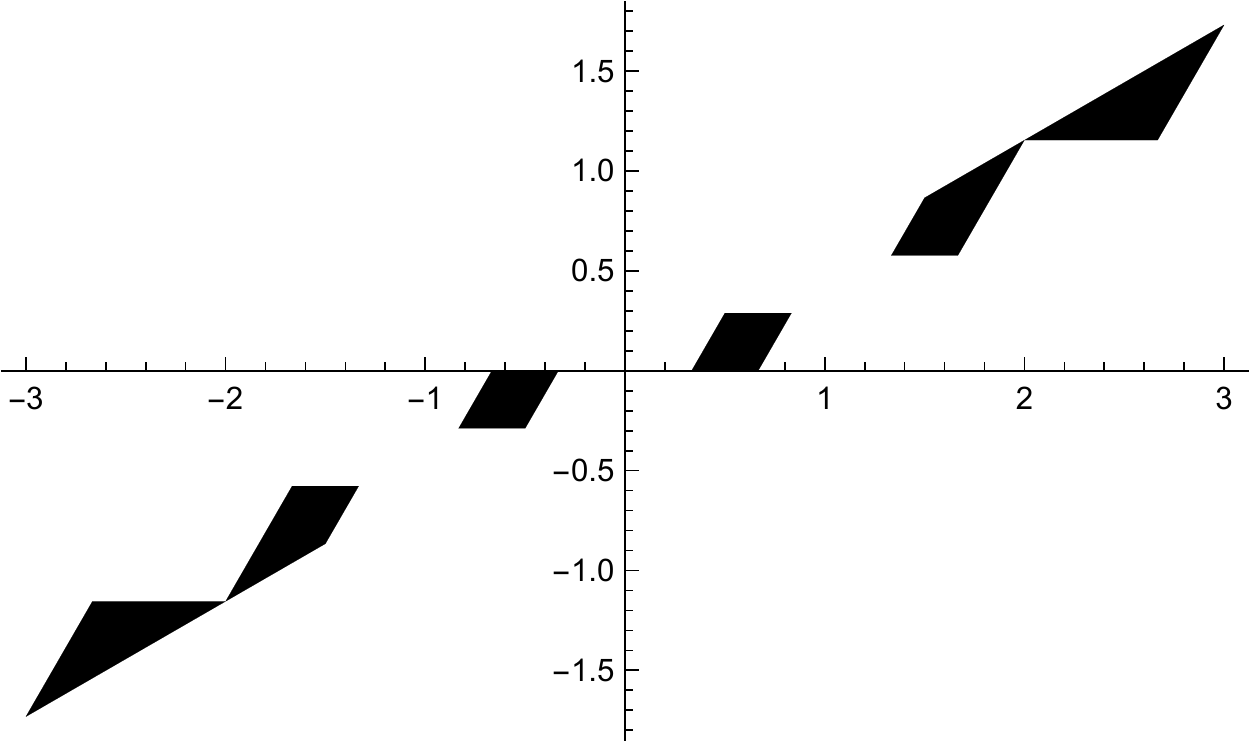}}
\end{picture}&
\setlength{\unitlength}{115bp}
\begin{picture}(1,1)(0,0)
\put(0,0){\includegraphics[width=\unitlength]{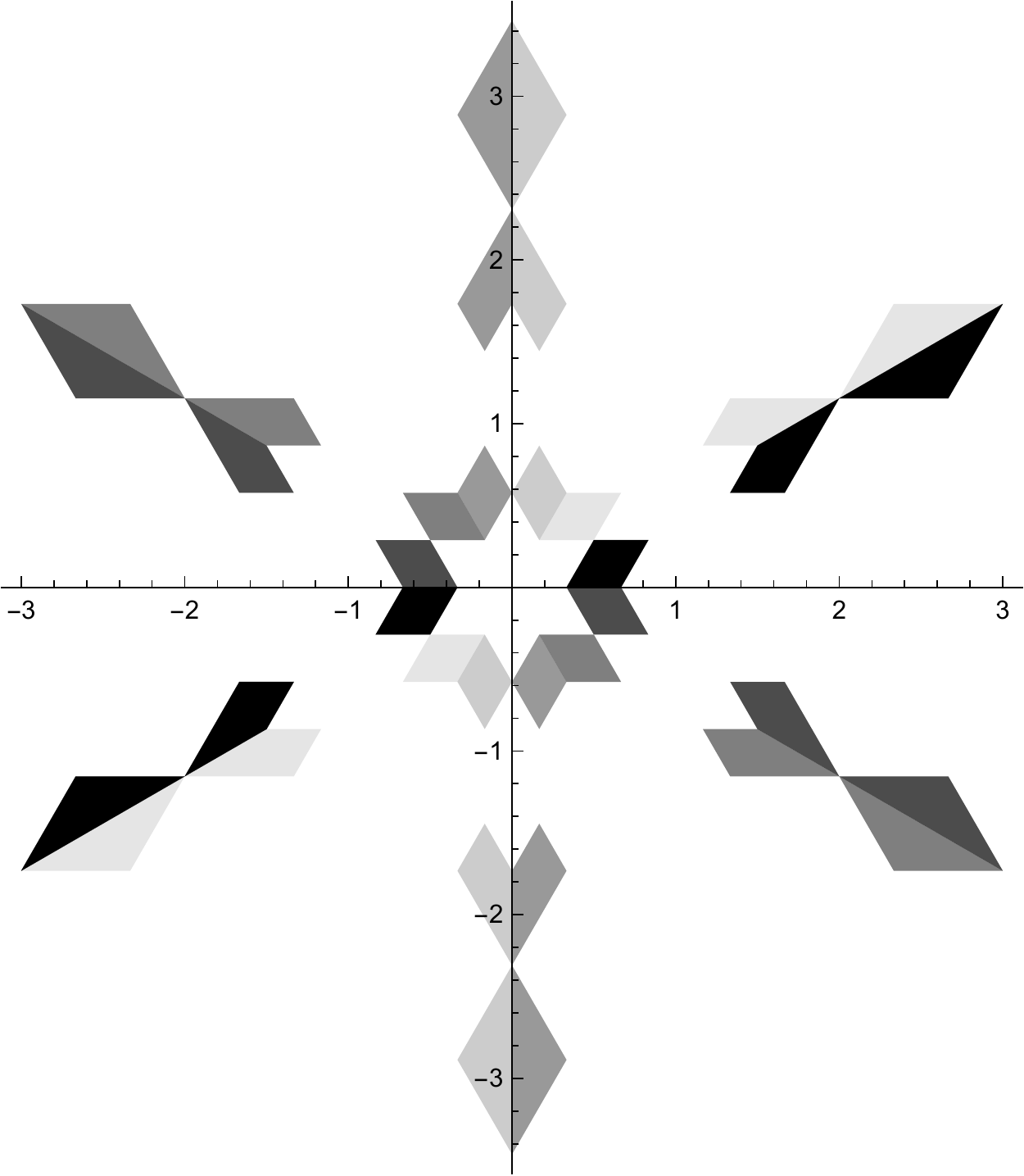}}
\end{picture}\\
D&W^{\rm p3m1}=LD&\cup_{S\in\mathcal B^{\rm p3m1}} SW^{\rm p3m1}
\end{array}$
\caption{Building a simple wavelet set for p3m1 and p31m}
\label{p31md2}
\end{figure}

\end{document}